\newcommand{\be}{\begin{equation}}
\newcommand{\ee}{\end{equation}}
\newcommand{\beq}{\begin{eqnarray}}
\newcommand{\eeq}{\end{eqnarray}}
\newtheorem{thm}{Theorem}[section]
\newtheorem{lma}{Lemma}[section]
\newtheorem{prop}{Proposition}[section]
\newtheorem{cor}{Corollary}[section]
\newtheorem{claim}{Claim}[section]
\theoremstyle{remark}
\numberwithin{equation}{section}
\def\be{\begin{equation}}
\def\ee{\end{equation}}
\def\bee{\begin{equation*}}
\def\eee{\end{equation*}}
\def\K{K\"ahler }
\def\KR{K\"ahler-Ricci }
\def\Ric{\text{\rm Ric}}
\def\Rm{\text{\rm Rm}}
\def\e{\varepsilon}
\def\a{{\alpha}}
\def\b{{\beta}}
\begin{document}

\title[]
{Uniqueness of Ricci flow with scaling invariant estimates}

\author{Man-Chun Lee}
\address[Man-Chun Lee]{Department of Mathematics, The Chinese University of Hong Kong, Shatin, N.T., Hong Kong
}
\email{mclee@math.cuhk.edu.hk}

\renewcommand{\subjclassname}{
  \textup{2020} Mathematics Subject Classification}
\subjclass[2020]{ Primary 53E20
}

\date{\today}

\begin{abstract}
In this work, we prove the uniqueness for complete non-compact Ricci flow with scaling invariant curvature bound. This generalizes the earlier work of Chen-Zhu, Kotschwar and covers most of the  examples of Ricci flows with unbounded curvature. In dimension three, we use it to show that complete Ricci flow starting from uniformly non-collapsed, non-negatively curved manifold is unique, extending the strong uniqueness Theorem of Chen. This is based on fixing gauge through Ricci-harmonic map heat flow in unbounded curvature background.
\end{abstract}

\keywords{Ricci flow, scaling invariant estimate, uniqueness problem}

\maketitle

\markboth{Man-Chun Lee}{}

\section{Introduction}

Let $(M^n,g_0)$ be a smooth  Riemannian manifold. The Ricci flow starting from $g_0$ is a one parameter family of metrics $g(t),t\in [0,T]$ satisfying
\begin{equation}
\partial_t g(t)=-2\Ric(g(t)),\;\; g(0)=g_0.
\end{equation}
This evolution system was introduced by Hamilton in \cite{Hamilton1982} and now it has proved to be powerful in the research of differential geometry and lower dimensional topology.

In this work, we are primarily interested in the analytic nature of Ricci flow. As the Ricci flow is only weakly parabolic, the short-time existence and uniqueness of solutions do not follow directly from standard parabolic PDE theory. For compact manifolds $M$, Hamilton established these properties through his seminal work \cite{Hamilton1982}, employing a Nash-Moser-type inverse function theorem approach. DeTurck \cite{DeTurck1983} later significantly simplified this framework by introducing the strictly parabolic Ricci-DeTurck flow, which is related to the Ricci flow through a diffeomorphism gauge choice. The non-compact case presents substantially greater challenges. Under the assumption of bounded initial curvature, Shi \cite{Shi1989} constructed a bounded curvature solution to the Ricci flow. Uniqueness in this setting was subsequently resolved by Chen-Zhu \cite{ChenZhu2006}, who generalized Hamilton's methodology by coupling the Ricci flow with harmonic map heat flow. Kotschwar \cite{Kotschwar2014} later streamlined these uniqueness results using energy methods, providing a more direct proof.

Nevertheless, several existence results have been established for Ricci flows starting from complete initial metrics with potentially unbounded curvature. For a comprehensive overview of this active field, see Simon’s survey paper \cite{Simon2024}, which synthesizes key developments including those in \cite{CabezasWilking,BamlerCabezasWilking2019,
ChanLeeHuang2024,ChuLee,GiesenTopping2011,HochardPhd,
Lai2019,LeeTam2021,LeeTopping2022,SimonTopping2021}. Notably, most solutions constructed in these works exhibit curvature decay of the form $\a t^{-1}$ for some $\a>0$ as the flow evolves. This class of solutions is particularly significant for two reasons: First, their estimates are scaling-invariant, making them robust under parabolic dilations. Second, they naturally model the geometric smoothing of metric cones at infinity, a central feature of complete non-compact manifolds with Euclidean volume growth. These analytic and geometric properties have proven indispensable in applications of Ricci flow to non-compact geometries, underpinning many recent advances in the field.

Motivated by advances in Ricci flow theory for smooth non-compact initial data, we re-examine the uniqueness problem without imposing bounded curvature conditions. A fundamental challenge arises from the fact that uniqueness for parabolic equations on complete non-compact manifolds generally fails without growth restrictions on solutions. Our focus centers on Ricci flow solutions exhibiting scaling-invariant curvature decay, a natural class of flows preserving geometric information under parabolic rescaling. Within this framework, we establish the following central result:

\begin{thm}\label{thm:unique}
Suppose $(M,g_0)$ is a complete non-compact manifold. If $g(t)$ and $\tilde g(t)$ are complete  solutions to Ricci flow on $M\times [0,T]$ such that $g(0)=\tilde g(0)=g_0$ and 
\begin{equation}
|\Rm(g(t))|+|\Rm(\tilde g(t))|\leq \a t^{-1}
\end{equation}
on $M\times (0,T]$ for some $\a>0$, then $g(t)\equiv \tilde g(t)$ on $M\times [0,T]$.
\end{thm}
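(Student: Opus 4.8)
\noindent\emph{Proof strategy.} The plan is to adapt the DeTurck-gauge approach of Chen--Zhu \cite{ChenZhu2006} and the energy method of Kotschwar \cite{Kotschwar2014} to the scaling-invariant setting, the new feature being that the curvature is allowed to blow up like $t^{-1}$ as $t\to 0^+$.

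I would first reduce to a short time interval. Put $t_*=\sup\{s\in[0,T]:g\equiv\tilde g\text{ on }M\times[0,s]\}$; this supremum is attained by continuity. If $t_*>0$, then on $[t_*,T]$ both flows have bounded curvature $\le\alpha/t_*$ and coincide at $t_*$, so the classical uniqueness theorem for bounded-curvature Ricci flow \cite{ChenZhu2006,Kotschwar2014} gives $g\equiv\tilde g$ on $[t_*,T]$, hence $t_*=T$. Thus it suffices to treat $t_*=0$, i.e.\ to produce $\delta>0$ with $g\equiv\tilde g$ on $M\times[0,\delta]$; this is the only place where the unbounded curvature enters. On such an interval one records, from the continuity of $g$ and $\tilde g$ at $t=0$ together with Shi-type interior estimates applied at the parabolic scale $\sqrt t$, that the two flows are locally uniformly equivalent to $g_0$ and satisfy $|\nabla^m\Rm|\le C_m t^{-1-m/2}$; these scaling-invariant bounds are the backbone of the rest of the argument.

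The crux is then to solve the Ricci--harmonic map heat flow for each flow against the fixed background $g_0$: find $\phi^i_t$ solving $\partial_t\phi^i_t=\Delta_{g_i(t),\,g_0}\phi^i_t$ with $\phi^i_0=\mathrm{id}_M$ on $M\times[0,\delta]$. Since the domain metric $g_i(t)$ has curvature of order $t^{-1}$ near $t=0$ and the target $(M,g_0)$ may itself have unbounded curvature, standard harmonic-map-heat-flow existence does not apply; I would instead exhaust $M$ by compact domains with a natural boundary condition and derive \emph{scaling-invariant} a priori estimates --- a bound on the energy density $|d\phi^i_t|_{g_i(t),g_0}$ and on the $g_0$-displacement of $\phi^i_t$ from the identity --- from Bochner-type identities coupled to the curvature decay together with a distance-function barrier, then pass to the limit; the same estimates force $\phi^i_t$ to be a diffeomorphism once $\delta$ is small. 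The push-forward $\hat g_i(t):=(\phi^i_t)_*g_i(t)$ then solves the $g_0$-Ricci--DeTurck flow $\partial_t\hat g_i=-2\Ric(\hat g_i)+\mathcal L_{W_i}\hat g_i$ with DeTurck field $W_i^k=\hat g_i^{pq}(\Gamma(\hat g_i)^k_{pq}-\Gamma(g_0)^k_{pq})$, starts from $g_0$, and inherits $|\Rm(\hat g_i)|\le C t^{-1}$ and $\hat g_i(t)\to g_0$ locally as $t\to 0^+$.

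Finally, the difference $h:=\hat g_1-\hat g_2$ satisfies a strictly parabolic equation $\partial_t h=\hat g_1^{pq}\nabla^{g_0}_p\nabla^{g_0}_q h+b*\nabla^{g_0}h+c*h$ with $h(0)=0$ and scaling-invariant coefficient bounds $|b|\le Ct^{-1/2}$, $|c|\le Ct^{-1}$; since these are not time-integrable, I would close uniqueness with a weighted estimate --- either a weighted energy of the form $\int_M\psi^2\,t^{-N}|h|^2_{g_0}\,dV_{g_0}$ with $N=N(\alpha,n)$ large, whose $t^{-N}$ weight creates a negative term of size $Nt^{-1}(\cdots)$ absorbing all contributions of $b*\nabla^{g_0}h$ and $c*h$, or the Kotschwar-type energy built from $h$, $\Gamma(\hat g_1)-\Gamma(\hat g_2)$, $\Rm(\hat g_1)-\Rm(\hat g_2)$ and $W_1-W_2$ --- which together with the vanishing of $h$ at $t=0$ yields $\hat g_1\equiv\hat g_2=:\hat g$ on $[0,\delta]$. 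With $\hat g$ fixed, each $\phi^i_t$ is the flow of the single time-dependent vector field $-W(\hat g(t),g_0)$, whose $g_0$-norm is $\le Ct^{-1/2}$ and hence integrable in time, so ODE uniqueness forces $\phi^1_t=\phi^2_t$ and therefore $g_1(t)=(\phi^1_t)^*\hat g(t)=(\phi^2_t)^*\hat g(t)=g_2(t)$ on $[0,\delta]$; by the reduction above this propagates to all of $[0,T]$. The principal obstacle is the construction of the harmonic map heat flow in the unbounded-curvature background, including the diffeomorphism property; a close second is the scaling-invariant propagation of estimates through the DeTurck flow needed to defeat the non-integrable $t^{-1}$ coefficient --- both ultimately rely on the parabolic-rescaling symmetry of the hypothesis rather than on any uniform curvature bound.
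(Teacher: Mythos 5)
Your overall architecture (reduce to short time, pass to a DeTurck gauge via a Ricci--harmonic map heat flow, then run a parabolic uniqueness argument for the gauged flows) is the right family of ideas, but there is a genuine gap in the gauge choice, and it is precisely the point where the unbounded curvature of $g_0$ bites. You propose to solve $\partial_t\phi^i=\Delta_{g_i(t),\,g_0}\phi^i$ with the \emph{fixed} target $(M,g_0)$ and then compare the two $g_0$-Ricci--DeTurck flows $\hat g_i$. Under the hypotheses of the theorem, $g_0$ carries no curvature or injectivity-radius bound whatsoever (only the flows at positive times satisfy $|\Rm|\le\a t^{-1}$), so: (i) the Bochner identity for the energy density contains the pullback of the \emph{target} curvature (cf.\ Lemma~\ref{lma:evo-energy}), which with target $g_0$ is spatially uncontrolled, so your claimed scaling-invariant bound on $|d\phi^i_t|$ does not follow from "curvature decay plus a barrier"; and (ii) the reaction terms of the DeTurck system \eqref{eqn:RDF-2} relative to $g_0$ involve $\Rm(g_0)$, so the coefficient bounds $|b|\le Ct^{-1/2}$, $|c|\le Ct^{-1}$ that your weighted energy needs are false uniformly over the noncompact manifold. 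This is exactly the gauge-fixing obstruction described in the introduction: when the curvature is merely $O(t^{-1})$ the flow is not uniformly comparable to any fixed metric, so neither $g_0$ nor $g(T)$ works as a background. The paper's resolution is to take the \emph{other evolving flow} $\tilde g(t)$ as the target of the harmonic map heat flow, so that at every positive time and at every parabolic scale $\sqrt{t_k}$ the target geometry is controlled by the same scaling-invariant hypothesis; the existence is then built iteratively à la Hochard on shrinking balls and growing time intervals (Theorem~\ref{thm:local-existence}).

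Two further points. First, even granting your setup, the $t^{-N}$-weighted energy requires $|h|=O(t^{\ell})$ as $t\to0$ for $\ell$ arbitrarily large, not merely $\hat g_i(t)\to g_0$; establishing this high-order vanishing is itself nontrivial and in the paper is the content of Corollary~\ref{cor:poly-growth-h}, proved via a local maximum principle for the Ricci--DeTurck flow against the moving background. Second, the paper's endgame is not an energy estimate at all: the local theorem yields $(1-C_0t)g(t)\le F_t^*\tilde g(t)\le(1+C_0t)g(t)$ on a unit ball, and applying it to the parabolically rescaled flows $R^{-2}g(R^2t)$ and letting $R\to\infty$ upgrades this to exact equality $F_t^*\tilde g(t)=g(t)$; then $F_t$ is a local isometry, $DdF\equiv0$ forces $\partial_tF=0$, hence $F_t=\mathrm{id}$. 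This rescaling trick replaces your Gr\"onwall-type closing argument and is only available because every estimate in the construction is scaling-invariant. Your proposal correctly identifies the two principal obstacles but does not supply a mechanism that survives the unbounded geometry of the fixed background.
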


This result consequently applies to numerous Ricci flows constructed in the literature, including those developed in \cite{BamlerCabezasWilking2019,Lai2019,
LeeTam2021,SimonTopping2021,HochardPhd}, see also the reference therein. When the curvature is less than scaling invariant (i.e. bounded by $O(t^{-\gamma})$ for $\gamma\in (0,1)$), uniqueness was established by Kotschwar \cite{Kotschwar2014,Kotschwar2017}. When the flow are assumed to be equivalent to the initial metric $g_0$, uniqueness had also previously been demonstrated by the author and Ma \cite{LeeMa}.

The fundamental challenge in establishing Ricci flow uniqueness without bounded curvature assumptions stems from the absence of a canonical gauge-fixing mechanism. When the curvature is uniformly integrable as $t\to 0$, $g(t)$ is uniformly equivalent to $g(T)$ with bounded curvature. This way of gauge fixing enables us to compare metric freely. This geometric stability underpins Kotschwar's energy method approach \cite{Kotschwar2014}, where bounded curvature enables direct metric comparisons. To address the gauge problem under weaker  $\a/t$ type curvature decay, we revive Chen-Zhu's harmonic map heat flow strategy, originally devised for bounded curvature settings. Our key innovation lies in coupling two evolving Ricci flows $g(t),\tilde g(t)$  through a Ricci-harmonic map heat flow:
\begin{equation}
\partial_t F=\Delta_{g(t),\tilde g(t)}F.
\end{equation}
This system transforms one Ricci flow $g(t)$ into a Ricci-DeTurck flow relative to $\tilde g(t)$, leveraging the analytic advantages of strict parabolicity. Two principal obstacles arise in this framework: The target flow's scaling-invariant geometry precludes standard existence arguments; Spatial infinity introduces analytic subtleties absent in bounded curvature regimes.  To overcome this, we construct local solution with quantitative control using strongly the time zero asymptotic to get quantitative estimates. This relies on  the local maximum principle developed by the author and Tam \cite{LeeTam2022} and idea of Hochard \cite{HochardPhd} on building parabolic solutions with scaling invariant geometric estimates.

On the other hand, in a seminal contribution, Chen \cite{Chen2009} established strong uniqueness for complete Ricci flows emerging from Euclidean initial data in dimension three, leveraging delicate asymptotic properties of the flow. When the initial three-fold is only of $\Ric\geq 0$ and is uniformly volume non-collapsed, a short-time existence was established by Simon-Topping \cite{SimonTopping2021}. By synthesizing our new uniqueness theorem and curvature estimate from Perelman point-picking argument, we extend Chen’s uniqueness to non-Euclidean initial geometries which is uniformly volume non-collapsed and have $\Ric\geq 0$. We show that all complete Ricci flow solution must coincide with their constructed solution, for a short-time.

\begin{cor}\label{cor:strong-unique}
Suppose $(M^3,g_0)$ is a complete non-compact three dimensional manifold such that 
\begin{enumerate}
\item $\Ric(g_0)\geq 0$;
\item $\inf_M\mathrm{Vol}_{g_0}\left(B_{g_0}(x,1) \right)=v_0$ for some $v_0>0$,
\end{enumerate}
then any complete solution to the Ricci flow coincides with the solution constructed by  Simon-Topping \cite{SimonTopping2021} for a short-time.
\end{cor}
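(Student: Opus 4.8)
The plan is to derive Corollary~\ref{cor:strong-unique} from Theorem~\ref{thm:unique}: I would first show that \emph{any} complete Ricci flow starting from such a $g_0$ automatically obeys a scaling-invariant curvature bound on a short time interval, and then invoke the uniqueness theorem to compare it with the Simon-Topping solution. Write $g_{ST}(t)$, $t\in[0,\tau_0]$, for the complete solution constructed in \cite{SimonTopping2021}; its construction furnishes an estimate $|\Rm(g_{ST}(t))|\leq c_0\, t^{-1}$ on $M\times(0,\tau_0]$ with $c_0,\tau_0$ depending only on $v_0$. Let $g(t)$, $t\in[0,T]$, be an arbitrary complete solution to the Ricci flow with $g(0)=g_0$.

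The crux is an a priori estimate for $g(t)$: I claim there are $\tau_1,\alpha_0>0$, depending only on $v_0$, with
\begin{equation}
|\Rm(g(t))|\leq \alpha_0\, t^{-1}\qquad\text{on } M\times(0,\min\{T,\tau_1\}].
\end{equation}
To prove it I would run Perelman's point-picking argument along the lines of the Simon-Topping curvature estimate (with the Euclidean prototype of \cite{Chen2009} as a model). In dimension three, $\Ric(g_0)\geq 0$ gives $R(g_0)\geq 0$, and via Bishop-Gromov the hypothesis on $\mathrm{Vol}_{g_0}(B_{g_0}(x,1))$ yields a noncollapsing bound that propagates along the flow for a short time; since $g(t)$ is only assumed complete, these preservation statements must be obtained through localized maximum principle arguments rather than the global one, for which I would lean on \cite{LeeTam2022}. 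Granting this, if the displayed estimate failed I would select almost-maximal points $(x_k,t_k)$ with $t_k\,|\Rm(g(t_k))|(x_k)\to\infty$, rescale by the curvature, and pass to a smooth pointed limit flow using the preserved noncollapsing for injectivity radius control; Hamilton-Ivey pinching forces the limit to have $\Rm\geq 0$ and hence to be a $\kappa$-solution, while the nearly Euclidean volume ratios of small balls in the $t=0$ slice, which survive the rescaling, force the limit to be flat, contradicting the normalization $|\Rm|=1$ at the basepoint.

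With the estimate in hand, put $\tau=\min\{T,\tau_0,\tau_1\}$ and $\alpha=\max\{\alpha_0,c_0\}$. Then $g(t)$ and $g_{ST}(t)$ are complete Ricci flows on $M\times[0,\tau]$ with $g(0)=g_{ST}(0)=g_0$ and $|\Rm(g(t))|+|\Rm(g_{ST}(t))|\leq\alpha\, t^{-1}$ on $M\times(0,\tau]$, so Theorem~\ref{thm:unique} applies directly and gives $g(t)\equiv g_{ST}(t)$ on $M\times[0,\tau]$, which is precisely the short-time coincidence asserted.

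The main obstacle is the a priori curvature estimate, not the concluding application of Theorem~\ref{thm:unique}. The hard part will be propagating both $\Ric\geq 0$ and the volume noncollapsing forward in time for a flow that is merely assumed complete (with no a priori curvature bound), and then arranging the point-picking and Cheeger-Gromov compactness so that the blow-up limit is simultaneously a nonnegatively curved $\kappa$-solution and, by the behaviour of the $t=0$ slice at small scales, necessarily flat. Once these technical points are settled, no new ideas beyond Theorem~\ref{thm:unique} are needed.
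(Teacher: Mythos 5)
Your reduction is exactly the paper's: show that the arbitrary complete solution $g(t)$ satisfies a scaling-invariant bound $|\Rm(g(t))|\leq \alpha t^{-1}$ for a short time, and then run Theorem~\ref{thm:unique} against the Simon--Topping solution. The only divergence is how the a priori estimate is obtained, and here the paper does not reprove anything: it cites \cite[Theorem 1.2]{ChenXuZhang2013} for the fact that $\Ric\geq 0$ is preserved by any complete three-dimensional Ricci flow (a local pinching estimate, hence valid without curvature bounds), and then \cite[Lemma 2.1]{SimonTopping2023} for the statement that a complete flow with $\Ric(g(t))\geq 0$ and $\mathrm{Vol}_{g_0}\left(B_{g_0}(x,1)\right)\geq v_0$ obeys $|\Rm(g(t))|\leq \alpha(v_0)\,t^{-1}$ up to a time depending only on $v_0$. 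Your plan to rederive this via point-picking is in the right spirit (that is how such lemmas are proved), but the sketch has two soft spots. First, the hypothesis gives, via Bishop--Gromov and $\Ric\geq 0$, only a uniform lower bound $\mathrm{Vol}(B(x,r))\geq v_0 r^3$ for $r\leq 1$, not ``nearly Euclidean volume ratios''; flatness of the blow-up limit cannot come from volume rigidity at this level of noncollapsing, and one must instead argue, e.g., that the limit is a $\kappa$-solution with positive asymptotic volume ratio and invoke Perelman's theorem that non-flat three-dimensional $\kappa$-solutions have zero asymptotic volume ratio. Second, transporting the time-zero volume bound to the rescaled slices requires distance-distortion control between $g_0$ and $g(t_k)$, which itself presupposes the $c/t$ bound being proved; this circularity is what makes the actual argument a bootstrap rather than a one-shot contradiction. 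Since both ingredients already exist in the literature, the corollary follows by citation, and your concluding application of Theorem~\ref{thm:unique} coincides with the paper's.
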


The paper is organized as follows. In Section~\ref{sec:pre-RF}, we collect some and prove some preliminaries estimate of Ricci flow, Ricci-DeTurck flow and Ricci-harmonic map heat flow. In section~\ref{sec:RHMF}, we construct local solution to the Ricci-harmonic map heat flow with quantitative estimate depending on scaling invariant curvature decay. In Section~\ref{sec:unique}, we use the quantitative local solution to prove uniqueness and strong uniqueness of Ricci flow.

\vskip0.2cm

{\it Acknowledgement}: The author would like to thank Shaochuang Huang and  Laura Bradby. The author  would like to thank  Felix Schulze and Peter Topping for favourable discussion which leads to a more transparent proof of Claim~\ref{claim:diff-local}. The author was partially supported by Hong Kong RGC grant (Early Career Scheme) of Hong Kong No. 24304222 and No. 14300623, a NSFC grant No. 12222122  and an Asian Young Scientist Fellowship.

\section{Some Preliminaries on Ricci flows}\label{sec:pre-RF}

In this section, we collect and prove several preliminary results for the Ricci flow under scaling invariant control. These results are purely local in nature and will play a crucial role in constructing global solutions to the Ricci–harmonic map heat flow.

\subsection{Distance distortion under $c/t$}

We begin by recording the distance distortion estimate for the Ricci flow under scaling invariant control. This estimate will be used repeatedly throughout this work.
\begin{lma}[Corollary 3.3 in \cite{SimonTopping2023}]
\label{lma:l-balls}
For $n\geq 2$ there exists a constant $\beta\geq 1$ depending only on $n$ such that the following is true. Suppose $(N^n,g(t))$ is a Ricci flow for $t\in [0,S]$ and $x_0\in N$   with $B_{g(0)}(x_0,r)\Subset N$ for some $r>0$, and $\Ric_{g(t)}\leq a/t$ on $B_{g_0}(x_0,r)$ for each $t\in (0,S]$. Then
$$B_{g(t)}\left(x_0,r-\beta\sqrt{a t}\right)\subset B_{g_0}(x_0,r).$$
\end{lma}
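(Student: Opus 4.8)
The plan is to establish the slightly stronger metric comparison $d_{g(t)}(x_0,y)\ge d_{g_0}(x_0,y)-\beta\sqrt{at}$ for points $y$ whose relevant minimizing geodesics stay in the controlled region; the stated inclusion is then the contrapositive, since $d_{g(t)}(x_0,y)<r-\beta\sqrt{at}$ forces $d_{g_0}(x_0,y)<r$. The analytic input is Perelman's changing--distances estimate: if $\Ric_{g(t)}\le(n-1)K$ on $B_{g(t)}(p,r_0)\cup B_{g(t)}(q,r_0)$ and $d_{g(t)}(p,q)\ge2r_0$, then, in the barrier sense,
\begin{equation}
\frac{d^{+}}{dt}\,d_{g(t)}(p,q)\ \ge\ -2(n-1)\left(\tfrac{2}{3}\,Kr_0+r_0^{-1}\right).
\end{equation}
Taking $p=x_0$, $(n-1)K=a/t$ and $r_0=r_0(t):=c\sqrt{t/a}$, and optimizing the dimensional constant $c$, gives $\tfrac{d^{+}}{dt}\,d_{g(t)}(x_0,q)\ge-C(n)\sqrt{a/t}$; since $\int_0^t\sqrt{a/s}\,ds=2\sqrt{at}$, integrating in time yields $d_{g(t)}(x_0,q)\ge d_{g_0}(x_0,q)-2C(n)\sqrt{at}$. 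Choosing $\beta$ to be a suitably large multiple of $C(n)$, at least $1$, and large enough to absorb the threshold $d_{g(t)}(x_0,q)\ge2r_0(t)$ and the trivial regime $r\le\beta\sqrt{at}$, then gives the claim --- \emph{provided} the curvature hypothesis above can be invoked throughout.

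Since $\Ric_{g(t)}\le a/t$ is assumed only on the fixed set $B_{g_0}(x_0,r)$, I would make this rigorous by a continuity argument in $t$. Let $T^{*}$ be the supremum of $\tau\in(0,\min\{S,r^{2}/(a\beta^{2})\}]$ for which $B_{g(t)}(x_0,r-\beta\sqrt{at})\subset B_{g_0}(x_0,r)$ for every $t\in(0,\tau]$. This set is nonempty --- as $t\downarrow0$ one has $g(t)\to g_0$ locally with $r-\beta\sqrt{at}\uparrow r$, and $\overline{B_{g_0}(x_0,r)}$ is compact --- and relatively closed by continuity of $t\mapsto g(t)$. For $t\le\tau<T^{*}$, the inclusion forces $\Ric_{g(t)}\le a/t$ on $B_{g(t)}(x_0,r-\beta\sqrt{at})$; with $c$ taken small (and capped as discussed below), the auxiliary balls $B_{g(t)}(x_0,r_0(t))$ and $B_{g(t)}(q,r_0(t))$ attached to a point $q$ with $d_{g(t)}(x_0,q)\le r-\beta\sqrt{at}$ can be arranged to lie in $B_{g_0}(x_0,r)$, so the estimate of the first paragraph applies and shows that every $q$ with $d_{g_0}(x_0,q)\ge r$ satisfies $d_{g(t)}(x_0,q)\ge r-\beta\sqrt{at}$ on $(0,\tau]$. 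A standard first--time argument then rules out $T^{*}<\min\{S,r^{2}/(a\beta^{2})\}$, and past that time $r-\beta\sqrt{at}\le0$ and the assertion is vacuous.

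I expect the spatial localization to be the main obstacle: guaranteeing that the two balls --- and the relevant arcs of the $g(t)$--minimizing geodesics --- needed to apply Perelman's estimate genuinely lie inside $B_{g_0}(x_0,r)$, which is delicate precisely when the test point $q$, or an interior bulge of the geodesic, sits near $\partial B_{g_0}(x_0,r)$, where there is no room on the outside. My plan for this is to first establish the comparison for test points with $d_{g_0}(x_0,q)$ \emph{strictly} less than $r$, using the full curvature control on $B_{g_0}(x_0,r)\supsetneq B_{g_0}(x_0,q)$ so that the $O(\sqrt{t/a})$ auxiliary balls still fit, if necessary marching the radius outward in finitely many increments of size $r_0(t)$, and then to let the radius increase to $r$. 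With this bookkeeping in place, the remaining ingredients --- the second--variation computation behind Perelman's estimate, the ODE comparison, and the elementary limiting cases --- are routine.
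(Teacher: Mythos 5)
The paper does not prove this lemma; it is quoted verbatim as Corollary 3.3 of Simon--Topping \cite{SimonTopping2023}. Your proposal is correct and follows essentially the same route as the original proof there: Perelman's changing-distances estimate with the scale $r_0\sim\sqrt{t/a}$, integration of $-C(n)\sqrt{a/t}$ in time, and a first-failure-time argument to keep the auxiliary balls inside the region where the Ricci upper bound is assumed.
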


\subsection{Ricci-harmonic map heat flow}

Suppose $f:(M,g)\to (N,\tilde g)$ is a smooth map, then $df$ is a section of $T^*M\otimes f^{-1}(TN)$ where $f^{-1}(TN)$ is the pull-back bundle by $f$. Let $D$ be the covariant derivative induced by the Riemannian connections of $g$ and $\tilde g$. Then $D^k df$ is a section of $(T^*M)^{k+1}\otimes  f^{-1}(TN)$. The trace $\Delta_{g,\tilde g} f$ of $Ddf$ with respect to $g$ is called the tension field which is a vector field along $f$. If $g(t)$ is a smooth Ricci flow on $M$ and $\tilde g(t)$ is a smooth Ricci flow on $N$, then the Ricci-harmonic map heat flow is given by
\begin{equation}\label{eqn:RHMHF}
\partial_t F=\Delta_{g(t),\tilde g(t)}F
\end{equation}
where $F:M\times [0,T]\to N$ and $\Delta_{g,\tilde g}F$ at time $t$ is the tension field of $F(\cdot, t)$ with respect to the metric $g(t)$ and $\tilde g(t)$.

In what follows, both the metrics on domain and target  will be evolved under the Ricci flow, 
$$\partial_t g(t)=-2\Ric(g(t))\quad\text{and}\quad \partial_t \tilde g(t)=-2\Ric(\tilde g(t)).$$

 Since the bundle $(T^*M)^{k+1}\otimes  f^{-1}(TN)$ changes with the time,  following \cite{ChenZhu2006} we define a covariant time derivative $D_t$ as follows:
\begin{equation}
D_t u^\a_{i_1i_2...i_{k+1}}=\frac{\partial}{\partial t} u^\a_{i_1i_2...i_{k+1}}+\tilde\Gamma^\a_{\b\gamma} F^\b_t u^\gamma_{i_1i_2...i_{k+1}}
\end{equation}
for section $u$ in $(T^*M)^{k+1}\otimes  f^{-1}(TN)$, where $\tilde\Gamma$ denotes the connection with respect to $\tilde g(t)$. We will omit the index $g(t), \tilde g(t)$ in $\Delta_{g,\tilde g}$ when the content is clear. We will always denote $n=\mathrm{dim}(M)$ and $m=\mathrm{dim}(N)$. When we refer to smooth map, we always use $\widetilde \Rm,\tilde\nabla$ to denote curvature and connection on the target.

We want to derive some a-priori estimates along the Ricci-harmonic map heat flow. We start with its evolution equations.
\begin{lma}\label{lma:evo-dF}
Along \eqref{eqn:RHMHF}, $F$ satisfies 
\begin{equation*}
(D_t-\Delta)D^k dF=\sum_{\ell=0}^k D^\ell\left( \Rm + \widetilde\Rm* dF* dF+\widetilde\Rm*dF\right)* D^{k-\ell}dF
\end{equation*}
In particular for any $k\in \mathbb{N}$, we have
\begin{equation*}
\begin{split}
\left(\frac{\partial}{\partial t}-\Delta\right) |D^k dF|^2&\leq-2|D^{k+1} dF|^2+ C(n,m,k)|D^k dF|  \Biggl[\sum_{\ell=0}^k|\nabla^\ell \Rm||D^{k-\ell}dF|\\
&\quad +\sum_{\ell=0}^k\sum_{p=0}^{\ell}\sum_{i+j=p} |\tilde\nabla^{\ell-p}\widetilde\Rm | |D^{i}dF||D^j dF||D^{k-\ell}dF|\\
&\quad+\sum_{\ell=0}^k\sum_{p=0}^{\ell} |\tilde\nabla^{\ell-p}\widetilde\Rm | |D^{p}dF||D^{k-\ell}dF|
\Biggr].
\end{split}
\end{equation*}
\end{lma}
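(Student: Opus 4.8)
The plan is to derive the evolution equations by the standard Bochner-type bookkeeping, carefully tracking how the pulled-back bundle $(T^*M)^{k+1}\otimes F^{-1}(TN)$ evolves under the two coupled Ricci flows. First I would recall the commutation identity for the covariant time derivative $D_t$ and the rough Laplacian $\Delta = \Delta_{g(t),\tilde g(t)}$ acting on sections of $F^{-1}(TN)$-valued tensors: the commutator $[D_t,\Delta]$ produces terms involving $\partial_t g$, $\partial_t\tilde g$, their first spatial derivatives (Christoffel variations), and the curvature of the pulled-back connection, which is $\widetilde\Rm$ evaluated on copies of $dF$. Substituting $\partial_t g=-2\Ric(g)$ and $\partial_t\tilde g=-2\Ric(\tilde g)$ converts the time-derivative-of-metric terms into curvature terms $\Rm$ and $\widetilde\Rm*dF*dF$. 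For the base case $k=0$, the map $F$ itself satisfies $\partial_t F=\Delta F$, and the standard Eells--Sampson-type computation of $(D_t-\Delta)dF$ under evolving metrics yields exactly $(\Rm + \widetilde\Rm*dF*dF)*dF$ plus the extra $\widetilde\Rm*dF$ coming from the time derivative of the target connection in $D_t$; this is where the $\widetilde\Rm*dF$ summand (without two $dF$ factors) enters.

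Next I would induct on $k$. Assuming the formula for $D^kdF$, apply $D$ (the spatial covariant derivative) to both sides and commute $D$ past $(D_t-\Delta)$. The commutator $[D,\Delta]$ on $F^{-1}(TN)$-valued tensors brings down one factor of $\Rm$ (from the base connection), one factor of $\nabla\Rm$-type terms acting through the Bianchi/second-Bianchi manipulations, and curvature-of-pullback terms $\widetilde\Rm*dF*D(\cdot)$ and $(\tilde\nabla\widetilde\Rm)*dF*dF*(\cdot)$; the commutator $[D,D_t]$ contributes $\nabla(\partial_t g)$ and $\nabla(\partial_t\tilde g)\cdot dF$ type terms, again converting to $\nabla\Rm$ and $\tilde\nabla\widetilde\Rm * dF$ under the Ricci flow equations. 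Distributing $D$ across the product on the right-hand side via the Leibniz rule and using $D(dF)=DdF$, $D(\Rm)=\nabla\Rm$, $D(\widetilde\Rm) = \tilde\nabla\widetilde\Rm * dF$ (the pullback derivative picks up a $dF$), one collects everything into the stated schematic sum $\sum_{\ell=0}^k D^\ell(\Rm + \widetilde\Rm*dF*dF + \widetilde\Rm*dF)*D^{k-\ell}dF$. I would be somewhat cavalier about the precise combinatorial constants, since only the schematic $*$-structure is needed.

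For the second, inequality form: starting from $(D_t-\Delta)D^kdF = (\text{RHS})$, compute $(\partial_t-\Delta)|D^kdF|^2 = 2\langle (D_t-\Delta)D^kdF, D^kdF\rangle - 2|D^{k+1}dF|^2$, where the $-2|D^{k+1}dF|^2$ is the usual Bochner term and one checks that $\partial_t|D^kdF|^2 = 2\langle D_t D^kdF, D^kdF\rangle$ because $D_t$ is compatible with the metric on $(T^*M)^{k+1}\otimes F^{-1}(TN)$ induced by $g(t),\tilde g(t)$ (this uses precisely that $D_t$ is defined with the $\tilde\Gamma$ correction term). Then I would expand $D^\ell(\widetilde\Rm*dF*dF)$ and $D^\ell(\widetilde\Rm*dF)$ by Leibniz, using $D(\widetilde\Rm)=\tilde\nabla\widetilde\Rm*dF$ so that $D^\ell\widetilde\Rm$ becomes $\sum_{p=0}^\ell \tilde\nabla^{\ell-p}\widetilde\Rm * (\text{products of }\le \ell-p+1\text{ copies of }D^{\ge 0}dF)$, bound everything in norm by Cauchy--Schwarz, and absorb combinatorial factors into $C(n,m,k)$. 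Matching the three groups of terms (pure $\nabla^\ell\Rm$; $\tilde\nabla^{\ell-p}\widetilde\Rm$ with two extra $dF$-factors $D^idF, D^jdF$; and $\tilde\nabla^{\ell-p}\widetilde\Rm$ with one extra factor $D^pdF$) to the claimed right-hand side is then bookkeeping.

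The main obstacle I anticipate is getting the derivative-of-pullback-curvature terms organized correctly: each spatial or time derivative that lands on $\widetilde\Rm$ (which lives on the target and is pulled back along $F$) produces an extra $dF$, so $D^{\ell-p}$ worth of derivatives on $\widetilde\Rm$ turns into $\tilde\nabla^{\ell-p}\widetilde\Rm$ together with up to $\ell-p$ additional $dF$-factors distributed among the slots — reconciling this with the stated triple sum $\sum_\ell\sum_p\sum_{i+j=p}$ (and the companion single sum) requires care, and is the step where one must be honest that the displayed inequality is a clean over-estimate of a messier exact identity rather than a term-by-term equality.
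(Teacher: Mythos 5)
Your proposal follows essentially the same route as the paper: the base case $k=0$ via the Ricci/Eells--Sampson identity, induction on $k$ via the commutators $[D,\Delta]$ and $[D,D_t]$ (with the Ricci flow equations converting $\partial_t\Gamma$, $\partial_t\tilde\Gamma$ into $\nabla\Ric$, $\tilde\nabla\widetilde\Ric$), and the observation that each derivative landing on the pulled-back $\widetilde\Rm$ produces an extra $dF$ factor; the paper carries this out by citing Chen--Zhu's Lemma 2.10 almost verbatim. One small caveat: $D_t$ as defined is \emph{not} compatible with the evolving metric on $(T^*M)^{k+1}\otimes F^{-1}(TN)$, so $\partial_t|D^kdF|^2$ picks up additional $\Ric*D^kdF*D^kdF$ and $(F^*\widetilde\Ric)*D^kdF*D^kdF$ terms beyond $2\langle D_tD^kdF,D^kdF\rangle$, but these are absorbed into the $\ell=0$ curvature terms already present on the right-hand side, so the conclusion is unaffected.
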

\begin{proof}

The second inequality follows from the first and the Ricci flow equations. 

It remains to verify the first equation. This is almost identical to \cite[Lemma 2.10]{ChenZhu2006}. We sketch the proof for readers' convenience. When $k=0$, we use Ricci identity to obtain
\begin{equation}
\left( D_t -\Delta_{g,h}\right) dF=\Ric_g * dF + \Rm_h * dF*dF*dF.
\end{equation}
This proves the case $k=0$. Suppose this holds for $k$, we want to show the case of $k+1$. This follows from Ricci identities that 
\begin{equation}
\begin{split}
(D\Delta -\Delta D) u&= (\Rm +\widetilde\Rm*dF*dF)*Du\\
&\quad +D \left( \Rm +\widetilde\Rm*dF*dF\right)*u 
\end{split}
\end{equation}
and 
\begin{equation}
\begin{split}
(DD_t -D_t D) u&= \nabla \Rm*u +\tilde\nabla\widetilde\Rm*dF*u+\widetilde\Rm*dF*DdF*u
\end{split}
\end{equation}
for any section $u$ in $(T^*M)^\ell \otimes F^{-1}(TN)$, $\ell\geq 1$. Here we have used Ricci flow equation to obtain
\begin{equation}
\partial_t \Gamma_{ij}^k=-g^{kl}\left(\nabla_i R_{jl}+\nabla_j R_{il}-\nabla_l R_{ij} \right)
\end{equation}
for both $g(t)$ and $\tilde g(t)$.
\end{proof}

When $k=0$, we have a sharper evolution equation which might be of independent interest, see also \cite[Theorem 1.1]{BamlerBrendle}.
\begin{lma}\label{lma:evo-energy}
Along \eqref{eqn:RHMHF}, $F$ satisfies 
\begin{equation}
\left(\frac{\partial}{\partial t}-\Delta \right)|dF|^2=-2|DdF|^2+2g^{ij} \left[g^{kl}(F^*\widetilde\Rm)_{iklj}-(F^*\widetilde \Ric)_{ij}\right]
\end{equation}
\end{lma}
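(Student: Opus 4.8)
The plan is to specialize the computation in Lemma~\ref{lma:evo-dF} to the case $k=0$ and extract the precise curvature terms rather than bounding them. Writing $F$ in local coordinates with $F^\alpha$ the components and $F^\alpha_i = \partial_i F^\alpha$, the quantity $|dF|^2 = g^{ij}\tilde g_{\alpha\beta} F^\alpha_i F^\beta_j$. First I would compute $\left(\frac{\partial}{\partial t}-\Delta\right)|dF|^2$ by the Bochner-type calculation: differentiating the product, the spatial Laplacian term produces $-2|DdF|^2$ plus a commutator term coming from interchanging two covariant derivatives on $dF$, and the time derivative contributes the $\partial_t g^{ij} = 2\Ric^{ij}$ and $\partial_t \tilde g_{\alpha\beta} = -2\widetilde\Ric_{\alpha\beta}$ pieces, together with the $D_t dF = \Delta dF + (\text{curvature})$ relation from the $k=0$ case of Lemma~\ref{lma:evo-dF}.

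The key cancellation to watch is that the domain Ricci curvature contributions from $\partial_t g^{ij}$ exactly cancel the $\Ric_g * dF$ term appearing in $(D_t - \Delta)dF$ at $k=0$; this is precisely the reason the harmonic map heat flow is coupled to the Ricci flow with these signs, and it is why the final formula has \emph{no} $\Ric_g$ term. What survives on the target side is the pull-back of the target curvature: the $\widetilde\Rm_h * dF * dF * dF$ term from $(D_t-\Delta)dF$ contracts against one more $dF$ and against $g^{ij}$ to give $g^{ij}g^{kl}(F^*\widetilde\Rm)_{iklj}$, while the $\partial_t \tilde g_{\alpha\beta} = -2\widetilde\Ric_{\alpha\beta}$ term produces $-g^{ij}(F^*\widetilde\Ric)_{ij}$. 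I would organize this by first recording the Bochner formula $\frac12\Delta|dF|^2 = |DdF|^2 + \langle dF, \Delta(dF)\rangle + \langle dF, \Ric_g \cdot dF\rangle - g^{ij}g^{kl}(F^*\widetilde\Rm)_{iklj}$ (the standard Eells--Sampson identity, with sign conventions matched to the excerpt), and then combining with $\partial_t|dF|^2$ computed term by term.

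The main obstacle will be bookkeeping of signs and index conventions: the factor-of-two normalizations in $\partial_t g = -2\Ric$, the placement of the curvature tensor indices in $(F^*\widetilde\Rm)_{iklj}$ versus $(F^*\widetilde\Rm)_{iklj}$, and making sure the $D_t$ covariant time derivative (which already absorbs $\tilde\Gamma^\alpha_{\beta\gamma}F^\beta_t$) is handled consistently when passing back to $\partial_t$. Concretely I would verify the $k=0$ case of Lemma~\ref{lma:evo-dF} in components, contract, and track each appearance of $\Ric_g$, confirming the cancellation; the remaining terms then assemble into the stated right-hand side. Since the inequality version is already recorded in Lemma~\ref{lma:evo-dF}, the only new content here is sharpening it to an equality with the explicit target-curvature terms, so once the signs are pinned down the proof is a short computation.
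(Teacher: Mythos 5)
Your proposal is correct and is essentially the paper's proof, which is stated in one line as a direct computation using the Ricci identity and the Ricci flow equations; you have simply written out the standard Eells--Sampson Bochner computation, correctly identifying the cancellation of the domain Ricci term against the $\partial_t g^{ij}=2R^{ij}$ contribution and the origin of the two target-curvature terms.
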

\begin{proof}
This follows from Ricci flow equation and direct computation using Ricci identity.
\end{proof}

In \cite[Lemma 2.12]{ChenZhu2006}, a simpler version of Lemma~\ref{lma:evo-dF} was used to show that, for the harmonic map heat flow, one can improve the regularity of $F$ by assuming a $C^2$ estimate along the flow together with bounded curvature. The following proposition shows that similar estimates also hold locally under only scaling invariant a‑priori assumptions.

\begin{prop}\label{prop:HF-boot}
Suppose $(M^n,g(t)), (N^m,\tilde g(t)),t\in [0,T]$ are two smooth solutions to the Ricci flow and $x_0\in M$ such that for some $\rho>0$
\begin{enumerate}
\item $B_{g(0)}(x_0,4\rho)\Subset M$;
\item $|\Rm(g(t))|+|\Rm(\tilde g(t))|\leq \a t^{-1}$ for $t\in(0,T]$, for some $\a>0$.
\end{enumerate}
If $F:B_{g(0)}(x_0,4\rho)\times [0,T]\to  N$ is a smooth solution to \eqref{eqn:RHMHF} such that $|d F|^2\leq \Lambda_0$ for some $ \Lambda_0>0$, then for all $k\in \mathbb{N}$, there exists $L_k(n,m,\a,\Lambda_0,k)$, $S_k(n,m,\a, \Lambda_0,k)>0$ so that for all $(x,t)\in B_{g(0)}(x_0,\rho)\times (0,T\wedge S_k\rho^2]$ and $ 0\leq \ell\leq k$,
\begin{equation}
 |D^\ell dF|^2\leq L_k t^{-\ell}. 
\end{equation}
\end{prop}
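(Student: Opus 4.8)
I would prove this by induction on $k$, running a Bernstein-type local maximum principle argument one derivative at a time. The starting point is the observation that we already control $|dF|^2 \le \Lambda_0$ on the whole of $B_{g(0)}(x_0,4\rho) \times [0,T]$, together with the scaling-invariant curvature bounds $|\Rm(g(t))|, |\Rm(\tilde g(t))| \le \a t^{-1}$, which give $|\nabla^\ell \Rm|, |\tilde\nabla^\ell \widetilde\Rm| \le C_\ell t^{-1-\ell/2}$ on slightly smaller balls after a short time — these are Shi-type derivative estimates available from the local curvature bound. (Here one should be mildly careful: the $\widetilde\Rm$ derivatives are estimated on $N$ along the image of $F$, but since they are composed with $F$ the relevant bound is still $t^{-1-\ell/2}$ in $t$.)

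The inductive step: assume $|D^j dF|^2 \le L_{k-1} t^{-j}$ for all $0 \le j \le k-1$ on $B_{g(0)}(x_0, \rho_{k-1}) \times (0, S_{k-1}\rho^2]$ for a slightly larger radius $\rho_{k-1} > \rho$. To get the $k$-th derivative, consider a test quantity of the form
\begin{equation}
\Phi = \left( A t^{k-1}|D^{k-1}dF|^2 + \Lambda_0 \right) t^k |D^k dF|^2,
\end{equation}
as in the classical Bernstein trick (Shi, Chen-Zhu), multiplied by a cutoff $\phi$ supported in $B_{g(0)}(x_0,\rho_{k-1})$ and equal to $1$ on $B_{g(0)}(x_0, \rho_k)$ with $\rho < \rho_k < \rho_{k-1}$. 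Using Lemma~\ref{lma:evo-dF} to compute $(\partial_t - \Delta)|D^k dF|^2$ and $(\partial_t - \Delta)|D^{k-1}dF|^2$, the good negative term $-2A t^{k-1}|D^{k-1}dF|^2 \cdot t^k |D^k dF|^2 \cdot (\text{from }-2|D^k dF|^2)$ absorbs the dangerous cross term $|D^{k-1}dF||D^{k+1}dF|$-type contributions coming from the product rule, provided $A$ is chosen large depending on the constants. The terms in the square bracket of Lemma~\ref{lma:evo-dF} involving $|\nabla^\ell \Rm|$ and lower-order $|D^j dF|$ factors are all controlled by the inductive hypothesis and the curvature estimates; a careful scaling count shows each such term, after multiplication by the powers of $t$ in $\Phi$, contributes something bounded (the point of the factors $t^{-1-\ell/2}$ matching against $t^{-j}$ for $D^j dF$ and against the explicit powers of $t$ in $\Phi$ — this is where scaling-invariance of the curvature bound is essential). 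For the terms involving $\widetilde\Rm$ and several factors of $dF$, one uses $|dF| \le \sqrt{\Lambda_0}$ to dispose of the extra $dF$ factors. The cutoff error terms $|\Delta \phi|, |\nabla \phi|^2$ are bounded using Lemma~\ref{lma:l-balls} to control $\phi$ under the evolving metric $g(t)$ (the cutoff is fixed with respect to $g(0)$ but $\Delta$ is with respect to $g(t)$; distance distortion keeps the support inside the region where we have estimates), and they are harmless since they come with a factor of the already-controlled lower-order quantities. One then applies the local maximum principle of Lee-Tam \cite{LeeTam2022} — which permits running the maximum principle on a ball that is only complete with respect to $g(0)$, without a global bound — to conclude $\Phi \le C$ on $B_{g(0)}(x_0,\rho_k) \times (0, S_k \rho^2]$ for suitable $S_k$, hence $|D^k dF|^2 \le L_k t^{-k}$ there. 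Shrinking the radius finitely many times over the induction from $4\rho$ down to $\rho$ is harmless.

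The main obstacle is the bookkeeping of the maximum principle on a non-complete ball with time-dependent metric: one cannot simply invoke the compact maximum principle, and one must ensure the cutoff function, the powers of $t$, and the inductive bounds fit together so that the Lee-Tam local maximum principle applies — in particular that $\Phi$ is bounded near the parabolic boundary (which is why the explicit $t^k$ weights, vanishing as $t \to 0$, are built in) and that all error terms are genuinely lower order after the scaling count. A secondary technical point is making the derivative-of-curvature estimates $|\nabla^\ell \Rm| \le C_\ell t^{-1-\ell/2}$ precise in the local, unbounded-curvature-near-$t=0$ setting, but these follow from standard local Shi estimates applied on parabolic cylinders of size comparable to $\sqrt{t}$ where the curvature is bounded by $\a/t$.
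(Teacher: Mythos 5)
Your overall strategy is the same as the paper's: induction on the number of derivatives, Shi-type bounds $|\nabla^\ell\Rm|\lesssim t^{-1-\ell/2}$ from the scaling-invariant curvature hypothesis, the two evolution inequalities from Lemma~\ref{lma:evo-dF}, a Bernstein combination weighted by explicit powers of $t$ so that the test quantity vanishes at $t=0$, absorption of the bad terms by the good $-2|D^{k+1}dF|^2$ term, and a localized maximum principle. (The paper uses the additive combination $\Lambda t^k|D^kdF|^2+t^{k+1}\Phi|D^{k+1}dF|^2$ rather than your multiplicative one, and also rescales each step to unit size; these are cosmetic differences.)

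There is, however, one concrete gap in your sketch: the cutoff. You take $\phi$ supported in a $g(0)$-ball and claim the error terms $|\Delta\phi|,|\nabla\phi|^2$ are controlled ``using Lemma~\ref{lma:l-balls}.'' That lemma only controls the inclusion of metric balls (hence the support of $\phi$), not $\Delta_{g(t)}\phi$: to bound the $g(t)$-Laplacian of a function of $d_{g(0)}$ you need a Hessian comparison for the \emph{time-zero} distance in the \emph{evolving} metric, and the hypothesis $|\Rm(g(t))|\le\a t^{-1}$ does not give this directly. The paper avoids the issue entirely by building the cutoff from Perelman's barrier: $\eta(x,t)=d_{g(t)}(x,x_1)+\Lambda_n\sqrt{\a t}$ satisfies $(\partial_t-\Delta_{g(t)})\eta\ge0$ in the barrier sense wherever $d_{g(t)}(x,x_1)\ge\sqrt{\a t}$, so composing with a non-increasing profile $\phi$ with $\phi''\ge-10^3\phi$, $|\phi'|^2\le10^3\phi$ and multiplying by $e^{-10^3t}$ yields a compactly supported $\Phi$ with $(\partial_t-\Delta)\Phi\ge0$ and $|\nabla\Phi|^2\le C\Phi$; Lemma~\ref{lma:l-balls} is then only used to keep the support inside the region where the inductive estimates hold. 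A second, smaller point: you invoke the Lee--Tam local maximum principle, but that result is tailored to proving \emph{improved decay} $\varphi\le t^\ell$ for quantities with $\varphi(\cdot,0)\le0$, and it is neither needed nor used here. Once the cutoff is compactly supported and the test function vanishes at $t=0$, the ordinary interior-maximum argument closes the proof on the incomplete ball, which is exactly what the paper does.
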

\begin{proof}
By scaling, we assume $\rho=1$.
This follows from a slight modification of  the argument in \cite[Lemma 2.12]{ChenZhu2006} except the curvature bound is bounded by $\a t^{-1}$ instead. We include the proof for readers' convenience. We will use $C_i$ to denote any constant depending only on $n,m,\a, \Lambda_0,k$.

We prove it by induction. Clearly, the case $k=0$ of the Proposition holds by assumption. Suppose the proposition holds for some $k$. Let $x_1\in B_{g(0)}(x_0,1)$ so that $B_{g(0)}(x_1,1)\Subset M$. We consider the rescaled solution $h(t)=4^2 g(4^{-2} t)$, $\tilde h(t)=4^2 \tilde g(4^{-2}t)$ and $\hat  F(t)=F(4^{-2}t)$ for $t\in [0,4^2T]$ so that $B_{h(0)}(x_1,4)\Subset M$. By induction, there is $L_k,S_k>0$ such that for all $0\leq \ell\leq k$, 
\begin{equation}
|D^\ell d\hat F|^2\leq L_k t^{-\ell}
\end{equation}
on $B_{h(0)}(x_1,1)\times (0,16T\wedge S_k]$. We will omit hat for notation convenience. We can as well assume by Shi's estimate \cite{Shi1989} that for all $0\leq \ell\leq k$,
\begin{equation}
|\nabla^\ell\Rm|^2+|\tilde\nabla^\ell \widetilde\Rm|^2\leq L_k t^{-\ell-2}
\end{equation}
on $B_{h(0)}(x_1,1)\times (0,16T\wedge S_k]$.

It then follows from Lemma~\ref{lma:evo-dF} that 
\begin{equation}
\left\{
\begin{array}{ll}
\displaystyle\left(\frac{\partial}{\partial t}-\Delta\right) |D^{k}  d F|^2&\leq -2|D^{k+1}dF|^2 +C_0t^{-1-k};\\[2mm]
\displaystyle\left(\frac{\partial}{\partial t}-\Delta\right) |D^{k+1} dF|^2&\leq -2|D^{k+2}dF|^2 +C_0|D^{k+1}dF|^2 t^{-1}+C_0t^{-2-k}.
\end{array} 
\right.
\end{equation}

By \cite[Lemma 8.3]{Perelman2002}, for some large $\Lambda_n>0$ the function $\eta(x,t)=d_{h(t)}(x,x_1)+\Lambda_n\sqrt{\a t}$ satisfies $(\partial_t-\Delta_{h(t)}) \eta\geq 0$ in the sense of barrier whenever $d_{h(t)}(x,x_1)\geq \sqrt{\a t}$. Fix a smooth non-increasing function $\phi$ on $[0,+\infty)$ such that $\phi=1$ on $[0,\frac12]$, vanishes outside $[0,1]$ and satisfies $\phi''\geq -10^3\phi$, $|\phi'|^2\leq 10^3 \phi$. Define a cut-off function $\Phi(x,t)=e^{-10^3t}\phi(\eta(x,t))$. We can assume $\Phi$ is compactly supported in $B_{h(0)}(x_1,1)\times [0,16T\wedge S_k]$ by Lemma~\ref{lma:l-balls}. We might assume $ (\partial_t-\Delta_{h(t)})\Phi\geq 0$ in the sense of barrier, by shrinking $S_k$.  We might also assume $\Phi$ to be smooth when applying maximum principle.

Let $\Lambda$ be a large constant to be chosen. We now consider the test function
$$G:=\Lambda t^{k} |D^k dF|^2+ t^{k+1}\Phi |D^{k+1}dF|^2$$
on the support of $\Phi$. We only need to examine its maximum on $B_{h(0)}(x_1,1)\times [0,16T\wedge S_k]$. It suffices to consider the case when the maximum is attained in the interior $(z_0,t_0)$ in which
\begin{equation}
\begin{split}
0&\leq (\partial_t-\Delta_{h(t)})G|_{(z_0,t_0)}\\
&\leq k\Lambda t^{k-1} |D^k dF|^2+ \Lambda t^k \left( -2|D^{k+1}dF|^2+C_0t^{-1-k}\right)\\
&\quad + (k+1)t^k \Phi |D^{k+1}dF|^2 -2t^{k+1} \langle \nabla\Phi,\nabla |D^{k+1}dF|^2\rangle \\
&\quad +t^{k+1}\Phi \left(-2|D^{k+2}dF|^2+C_0t^{-1}|D^{k+1}dF|^2+C_0t^{-2-k} \right)  \\
&\leq -t^{k+1}\Phi |D^{k+2}dF|^2 + t^k|D^{k+1}dF|^2 \left(-2\Lambda +(k+1)+ C_0 \right)\\
&\quad +C_1\Lambda t^{-1}+ C_1 t^{k+1} \frac{|\nabla\Phi|^2}{\Phi} |D^{k+1}dF| ^2.
\end{split}
\end{equation}

Since by construction, $ |\nabla\Phi|^2\leq 10^3\Phi$. If we choose $\Lambda$ large enough, then we conclude that at $(z_0,t_0)$,
\begin{equation}
\begin{split}
  t^{k+1}|D^{k+1}dF|^2 |_{(z_0,t_0)} \leq C_2
\end{split}
\end{equation}
By maximum principle, $G(x_1,t)\leq C_3$ for $t\in [0,16T\wedge S_k]$. Since $\Phi(x_1,t)\geq \frac12$ if $S_k$ is small enough, we obtain estimate of $|D^{k+1}dF|(x_1,t)$. By rescaling it back to the original Ricci-harmonic map heat flow, we conclude that 
\begin{equation}
|D^{k+1}dF( x_1,t)|^2\leq C_4 t^{-k-1}
\end{equation}
for $t\in (0,T\wedge 16^{-1}S_k]$. This completes the proof by induction.
\end{proof}

\medskip
\subsection{Existence in bounded curvature}

In \cite[Theorem 2.1]{ChenZhu2006}, Chen–Zhu established the existence of the harmonic map heat flow coupled with a bounded-curvature Ricci flow, starting from the identity map, when the target manifold is equipped with a fixed metric of bounded curvature. Following their method, it is straightforward to see that the same conclusion holds for the harmonic map heat flow coupled with evolving metrics of quasi‑bounded geometry; see also \cite{HuangTam2022}. For our purposes, we require only the following weaker version.

\begin{thm}\label{thm:chenzhu-harmonic-map}
For any $n\in \mathbb{N}, \Lambda_0>0$, there exists $\hat T_1(n,\Lambda_0)>0$ such that the following holds: Suppose $(M^n,h(t))$  and $(N^n,\tilde h(t))$ for $t\in [0,T]$ are two smooth family of complete metrics such that for some $\rho>0$ and for all $t\in [0,T]$,
\begin{enumerate}
\item $|\Rm(\tilde h)|+|\Rm(h)|\leq \rho^{-2}$;
\item $|\partial_t h|+|\partial_t \tilde h|\leq \rho^{-2}$ and;
\item $\sup_{M} |\partial^i_t \nabla^j \Rm(h)|+\sup_{N} |\partial_t^i \tilde \nabla^j \Rm(\tilde h)|<\infty$ for all $i,j\in \mathbb{N}$.
%\item $\mathrm{inj}_N(\tilde h(0))>0$;
%\item $h(t)$ is complete metric on $M$, for each $t\in [0,T]$.
\end{enumerate}
If $f:M\to N$ is a smooth map such that  
\begin{enumerate}
\item[(i)] $|d f|^2_{h(0),\tilde h(0)}\leq \Lambda_0$;
\item[(ii)] $\sup_M|Ddf|^2<+\infty$;
\item[(iii)] $f(M)\Subset N$.
\end{enumerate}
 Then the harmonic map heat flow:
\begin{equation*}
\left\{
\begin{array}{ll}
\partial_t F(x,t)=\Delta_{h(t),\tilde h(t)} F(x,t);\\[2mm]
F(x,0)=f(x)
\end{array}
\right.
\end{equation*}
has   a solution on $M\times [0, T\wedge \hat T_1\rho^2]$ such that $|d F|^2(x,t)\leq 10\Lambda_0$.
\end{thm}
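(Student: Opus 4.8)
The plan is to reduce the problem to a short-time existence statement for a strictly parabolic system and then apply a continuity/openness argument combined with a-priori estimates. First I would use the diffeomorphism/DeTurck trick: rather than analyzing the Ricci-harmonic map heat flow $\partial_t F=\Delta_{h(t),\tilde h(t)}F$ directly as a map, pull back the target metric $\tilde h(t)$ by $F$ and express the flow as a strictly parabolic system for $F$ in local coordinates on the target. Because of assumptions (1)--(3) and (i)--(iii), the background geometries are quasi-bounded, so after the usual scaling (reducing to $\rho=1$ by replacing $h,\tilde h$ with $\rho^{-2}h(\rho^2 t),\rho^{-2}\tilde h(\rho^2 t)$), the system has bounded, smooth coefficients with bounded derivatives of all orders, and $f(M)\Subset N$ lets us work in a fixed compact region of $N$ (slightly enlarged) using a finite atlas with uniformly controlled transition maps.

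\textbf{Existence.} Next I would establish short-time existence on $M\times[0,T']$ for some $T'=T'(n,\Lambda_0)>0$ independent of the particular $f$. Since $M$ is non-compact, I cannot invoke compact parabolic theory directly; instead I would either (a) run the fixed-point/linearization scheme of Chen--Zhu \cite[Theorem 2.1]{ChenZhu2006} directly — their argument only uses bounded geometry of domain and target, a bound on $|df|$, and $\sup_M|Ddf|<\infty$, all of which we have assumed — or (b) exhaust $M$ by compact domains $\Omega_j$, solve a parabolic boundary/initial-value problem on each (or on a closed double of $\Omega_j$ with a cutoff), obtain uniform estimates, and pass to a limit. Either way, the key point is that the scheme produces a solution on a time interval whose length depends only on the quasi-bounded geometry constants (normalized to $1$) and on $\Lambda_0$, which after unscaling gives $T\wedge\hat T_1\rho^2$.

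\textbf{A-priori estimate $|dF|^2\le 10\Lambda_0$.} The central quantitative input is the energy density estimate. Here I would use Lemma~\ref{lma:evo-energy}: along the flow,
\begin{equation*}
\left(\frac{\partial}{\partial t}-\Delta\right)|dF|^2 = -2|DdF|^2 + 2g^{ij}\left[g^{kl}(F^*\widetilde\Rm)_{iklj}-(F^*\widetilde\Ric)_{ij}\right].
\end{equation*}
After scaling to $\rho=1$, the curvature terms on the right are bounded by $C(n)(|dF|^2+|dF|^4)$ in absolute value, so dropping the good term $-2|DdF|^2$ gives $(\partial_t-\Delta)|dF|^2\le C(n)(|dF|^2+|dF|^4)$. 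Since $M$ is complete with quasi-bounded geometry, one can run a maximum principle for this semilinear inequality — either via a Omori--Yau/cutoff argument using the completeness and the uniform bounds, or by first proving the weaker fact that $|dF|$ stays bounded on short times from the construction and then upgrading — and conclude that $|dF|^2(x,t)\le 2\Lambda_0<10\Lambda_0$ on $[0,\hat T_1]$ once $\hat T_1(n,\Lambda_0)$ is chosen small enough that the ODE comparison $\phi'=C(n)(\phi+\phi^2)$, $\phi(0)=\Lambda_0$ does not exceed $2\Lambda_0$. This simultaneously feeds back into the existence argument: as long as $|dF|^2$ stays $\le 10\Lambda_0$, the solution does not degenerate and can be continued, so a standard openness--closedness argument on the maximal existence time shows it extends to all of $[0,T\wedge\hat T_1\rho^2]$.

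\textbf{Main obstacle.} I expect the principal difficulty to be the non-compactness of $M$ in the existence step: making the Chen--Zhu linearization scheme (or the exhaustion argument) produce a solution on a uniform time interval, with the subtlety that the natural a-priori control $\sup_M|Ddf|<\infty$ at time $0$ must be propagated (at least on a slightly shorter interval) to keep the iteration closed, and that one must control $F$ near spatial infinity so that $F(M\times[0,t])$ stays in a fixed compact part of $N$ — this is where $f(M)\Subset N$, the distance distortion Lemma~\ref{lma:l-balls}, and the energy bound are all used together. The interior higher-order estimates needed to close the scheme are exactly the content of Proposition~\ref{prop:HF-boot} (in its easier bounded-curvature form), so once existence on a uniform interval is in hand the regularity and the bound $|dF|^2\le 10\Lambda_0$ follow as above.
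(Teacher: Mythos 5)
Your proposal matches the paper's proof in all essentials: both rescale to $\rho=1$, construct the solution by a Dirichlet exhaustion of $M$ following Chen--Zhu (using $f(M)\Subset N$, the injectivity radius/bounded geometry of the target, and uniform local higher-order estimates to pass to a limit), and then obtain the uniform time $\hat T_1(n,\Lambda_0)$ and the bound $|dF|^2\le 10\Lambda_0$ from the evolution inequality $\left(\partial_t-\Delta\right)|dF|^2\le C_n(|dF|^4+|dF|^2)$ via the maximum principle and ODE comparison. The paper's proof is itself only a sketch of the same strategy, so no substantive differences remain.
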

\begin{proof}
By scaling, we might assume $\rho=1$. This is almost identical to the proof of \cite[Theorem 2.7]{ChenZhu2006}, we only sketch the proof and point out the necessary modifications. We choose a open set $V\Subset N$ such that $f(M)\Subset V$ and $\mathrm{inj}(\tilde h(t))>\iota>0$ on $V\times [0,T]$ for some $\iota>0$. Let $\{\Omega_i\}_{i=1}^\infty$ be a compact exhaustion of $M$. Following the proof of \cite[Lemma 2.9]{ChenZhu2006},  we construct a sequence of harmonic map heat flow $F_i$ coupled with $h(t)$ and $\tilde h(t)$ on each $\Omega_i\times [0,T_i]$ with Dirichlet boundary condition $F=f$. Using $\mathrm{inj}(\tilde h)>\iota$ and curvature estimates of $h$ and $\tilde h$, the sequence of Dirichlet solution $F_i$ satisfies $F_i(M)\Subset V$ for $t\in [0,T_i]$ and uniform local estimates, see the proof of \cite[Lemma 2.11 \& 2.12]{ChenZhu2006}. In particular, $T_i>T_0>0$ for some $T_0>0$ possibly depending also on $V$. The higher order estimates also enable us to construct global solution by limiting argument with bounded global energy $\sup_{M\times [0,T_0]}|dF|<+\infty$. We now show that the constructed solution can be extended to some uniform time depending only on upper bound of $|\Rm(h)|,|\Rm(\tilde h)|, |\partial_t h|,|\partial_t \tilde h|$ and $\Lambda_0$.  It suffices to control the energy uniformly for uniform short time. Using the estimate on curvature and time derivatives of metrics, we have 
\begin{equation}
\left(\frac{\partial}{\partial t}-\Delta_{h(t)}\right) |dF|^2 \leq C_n (|dF|^4+|dF|^2).
\end{equation}
Since the energy is bounded for short-time, we apply maximum principle to $|dF|$ to show that for some $\hat T_1(n,\Lambda_0)>0$, we must have $|dF|^2\leq 10\Lambda_0$ if $t<\hat T_1\wedge T$.  Thus the maximal existence time must be beyond $\hat T_1(n,\Lambda_0)\wedge T>0$.  This completes the proof.
\end{proof}

Instead of using a Dirichlet exhaustion, we construct local approximations by exhausting $M$ with complete manifolds. This can be achieved by modifying the incomplete metric near its ends. In this way, we minimize the required regularity assumptions.

\begin{lma}\label{lma:conformal}
There exists $\sigma(n)\in (0,1),\Lambda_1(n)>1$ such that the following holds:  Suppose $g(t),t\in [0,T]$ is a family of smooth metrics on $N$ such that 
\begin{enumerate}
\item $|\partial_t g(t)|+|\Rm(g(t))|\leq \rho^{-2}$; 
\item $|\nabla \partial_t g(t)|\leq \rho^{-3}$
\end{enumerate}
for some $\rho>0$. Then there exists a smooth family of complete metrics $h(t)$ on $N\times [0,T\wedge \sigma \rho^2]$  such that
\begin{enumerate}
\item[(i)] $h(t)\equiv g(t)$ on $N_\rho:=\{x\in N: B_{g(0)}(x,\rho)\Subset N\}$;
\item [(ii)] $|\Rm(h(t))|+|\partial_t h(t)|\leq \Lambda_1 \rho^{-2}$ on $N\times [0,T\wedge \sigma\rho^2]$;
\item[(iii)] $h(t)\geq g(t)$ on $N\times [0,T\wedge \sigma\rho^2]$.
\end{enumerate}
Furthermore, if $g(t),t\in [0,T]$ is smooth metric on a open set $U$ such that $N\Subset U$, then $h(t)$ has bounded geometry of infinity  order.
\end{lma}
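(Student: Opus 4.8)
Here is how I would go about proving Lemma~\ref{lma:conformal}.

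\medskip
\textbf{Strategy and construction of the weight.} The plan is to complete $g(t)$ by a \emph{time--independent} conformal deformation $h(t)=e^{2\varphi}g(t)$, where $\varphi\ge 0$ vanishes on $N_\rho$ and grows like $-\log\big(\mathrm{dist}_{g(0)}(\,\cdot\,,\text{incompleteness of }N)\big)$ near the missing ends. Logarithmic growth is the sweet spot: it is the slowest growth making the rescaled metric complete, yet slow enough that the conformal factor $e^{-2\varphi}$ multiplying the curvature of $h(t)$ is comparable to the square of that distance, which exactly cancels the inverse--square blow--up of the Hessian terms produced by a conformal change — this is what forces the \emph{uniform} bound in (ii). Concretely, after parabolic rescaling assume $\rho=1$, and set $d(x)=\min\{2,\ \sup\{r>0:B_{g(0)}(x,r)\Subset N\}\}$, which is $1$--Lipschitz for $g(0)$, bounded, and $\equiv 2$ on $N_\rho=N_1$ (truncating at $2$ costs nothing, since $\varphi$ will vanish there). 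Using only $|\Rm(g(0))|\le 1$, I would construct a smooth regularization $v$ of $d$ in the usual way: cover $N$ by balls $B_{g(0)}(x_i,d(x_i)/10)$, take a subordinate partition of unity $\{\chi_i\}$ with $|\nabla\chi_i|_{g(0)}\le Cd(x_i)^{-1}$ and $|\nabla^2\chi_i|_{g(0)}\le Cd(x_i)^{-2}$, and put $v=\sum_i d(x_i)\chi_i$. Since the $d(x_i)$ are mutually comparable on overlaps and $\sum_i\nabla\chi_i=\sum_i\nabla^2\chi_i=0$, this yields $C^{-1}d\le v\le Cd$, $|\nabla v|_{g(0)}\le C$ and $|\nabla^2 v|_{g(0)}\le Cv^{-1}$ with $C=C(n)$; in particular $v\ge c_0=c_0(n)>0$ on $N_1$. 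Fix a smooth non--increasing $\psi\colon(0,\infty)\to[0,\infty)$ with $\psi\equiv 0$ on $[c_0,\infty)$, $\psi(s)=-\log s$ on $(0,c_0/2]$, $|\psi'|\le 2s^{-1}$, $|\psi''|\le 2s^{-2}$, and set $\varphi=\psi\circ v$, $h(t)=e^{2\varphi}g(t)$ on $N\times[0,T\wedge\sigma]$ with $\sigma=\sigma(n)$ small to be fixed.

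\medskip
\textbf{Verification.} Property (i) is immediate, as $\varphi\equiv 0$ on $N_1$, and (iii) holds since $\varphi\ge 0$. For completeness of $h(t)$: along a curve escaping every compact subset of $N$ the $h(t)$--length is infinite, because either its $g(0)$--length is infinite (and $e^{2\varphi}\ge 1$), or its $g(0)$--length $\ell$ is finite, in which case, parametrized by $g(0)$--arclength $s$, one has $d\le\ell-s$ near $s=\ell$, hence $\varphi=-\log v\ge-\log(C(\ell-s))$ and $\int^{\ell}e^{\varphi}\,ds=\infty$. For (ii): since $\varphi$ is time--independent, $\partial_t h=e^{2\varphi}\partial_t g$, so $|\partial_t h|_{h}=|\partial_t g|_{g}\le 1$; and on $N_1$, $|\Rm(h)|=|\Rm(g)|\le 1$. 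On $\{\varphi\ne 0\}$ one has $v\le c_0<1$, $e^{-2\varphi}\le v^{2}$, $|\nabla^{g(0)}\varphi|_{g(0)}\le Cv^{-1}$ and $|\nabla^{g(0),2}\varphi|_{g(0)}\le Cv^{-2}$; choosing $\sigma=\sigma(n)$ small, hypotheses (1)--(2) give $\tfrac12 g(0)\le g(t)\le 2g(0)$ and $|\Gamma(g(t))-\Gamma(g(0))|_{g(0)}\le C\sigma$ on $[0,\sigma]$, whence $|\nabla^{g(t)}\varphi|_{g(t)}\le Cv^{-1}$ and $|\nabla^{g(t),2}\varphi|_{g(t)}\le Cv^{-2}+C\sigma v^{-1}\le Cv^{-2}$. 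Feeding this into the conformal transformation law for the curvature,
\[
|\Rm(h(t))|_{h(t)}\le e^{-2\varphi}\big(|\Rm(g(t))|_{g(t)}+C|\nabla^{g(t),2}\varphi|_{g(t)}+C|\nabla^{g(t)}\varphi|_{g(t)}^{2}\big)\le v^{2}\big(1+Cv^{-2}\big)\le \Lambda_1(n),
\]
and undoing the scaling proves (ii). Finally, if $g(t)$ is smooth on an open $U$ with $N\Subset U$, then near $\partial N$ the $g(t)$ have bounded geometry of all orders and are non--collapsed (compactness of $\partial N$), and $v$ can be chosen with $|\nabla^{k}v|_{g(0)}\le C_k v^{1-k}$ for every $k$; then the end of $h(t)$ replacing $\partial N$ is a cusp--type metric $\sim v^{-2}g(t)$ with $|\nabla^{h,k}\Rm(h)|_h\le C_k$ for all $k$, and, as $v$ is essentially constant on unit $h$--balls while $\mathrm{vol}_{h}=v^{-n}\mathrm{vol}_{g}$, with $\mathrm{inj}(h)$ bounded below; hence $h(t)$ has bounded geometry of infinity.

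\medskip
\textbf{Main difficulty.} The technical heart is producing the regularized distance $v$ with the scale--invariant Hessian bound $|\nabla^2 v|_{g(0)}\le Cv^{-1}$ — equivalently, a partition of unity adapted to the position--dependent scale $d(x_i)$ with two derivatives controlled — since it is precisely this that makes $e^{2\psi(v)}$ simultaneously complete--making and curvature--bounded. The secondary point is transferring the $g(0)$--estimates on $\varphi$ to $g(t)$: this is the only use of hypothesis (2), and it is what confines the conclusion to the short interval $[0,T\wedge\sigma\rho^2]$ rather than all of $[0,T]$.
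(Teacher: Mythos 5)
Your proposal is correct and follows essentially the same route as the paper, which is Hochard's conformal completion: set $h(t)=e^{2\varphi}g(t)$ with a \emph{time-independent} logarithmic conformal factor of a regularized boundary-scale function, and use hypotheses (1)--(2) only to transfer the $g(0)$-derivative bounds on $\varphi$ to $g(t)$ on a short time interval. The one ingredient you assert rather than prove — the regularized distance $v$ with $|\nabla v|\le C$ and $|\nabla^2 v|\le Cv^{-1}$ under only a curvature bound (no injectivity radius control, so naive distance-function cutoffs do not suffice) — is precisely the mollification lemma the paper itself imports from Hochard's thesis, so the two arguments are at the same level of completeness.
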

\begin{proof}
This follows from a modification of \cite[Corollary IV.1.2.]{HochardPhd}, in which we now allow the metrics to be evolving. By scaling, we assume $\rho=1$. On $N$, we let 
$$\rho_1(x):=\sup\{ r>0: B_{g(0)}(x,r)\Subset N, \sup_{B_{g(0)}(x,r)}|\Rm(h(0))|\leq r^{-2}\}.$$

By \cite[Lemma IV.1.3]{HochardPhd}, we can mollify $\frac12 \rho_1$ to a smooth function $\tilde\rho_1$ on $N$ such that 
\begin{equation}\label{eqn:rho_1-bdd}
\frac12 \rho_1(x)\leq \tilde\rho(x)\leq \frac32 \rho_1(x),\;\; |\nabla^{g(0)}\tilde \rho|+\rho_1|\nabla^{2,g(0)}\tilde\rho|\leq C_n
\end{equation}
on $N$, for some $C_n>0$.

Given $\e>0$, we let $f:(0,+\infty)\to(0,+\infty)$ be a smooth function (smoothed from $-\log\left( 1- (1-\e^{-1}x)^2\right)$) such that 
\begin{enumerate}
\item $f(x)=-\log\left( 1- (1-\e^{-1}x)^2\right)$ for $x\leq 0.9\e$;
\item $f(x)=0$ for $x\geq 1.1\e$;
\item $0\geq f'(x)\geq -2(\e-x)x^{-1}(2\e-x)^{-1}$ for $x >0$;
\item $0\leq f''(x)\leq 2\e^{-2} \left(1+(1-\frac{x}{\e})^2 \right)\left(1-(1-\frac{x}{\e})^2 \right)^{-1}$ for $x>0$.
\end{enumerate}
We now define the family of metric $h(t):=e^{2f\circ \tilde \rho}$ on $M$ so that $h(t)$ is complete metric on $N$ for all $t>0$, $h(t)=g(t)$ on $\{x\in N: \rho_1(x)\geq \frac{22}{10}\e\}$. When $g(t)$ is static, it was shown by Hochard \cite{HochardPhd} using direct computation that the curvature of $h(t)$ is bounded by $C\e^{-2}$ for some $C_n>0$. If $g(t)$ is time-independent, the result follows by fixing a small dimensional $\e>0$. To extend it to evolving family of metrics, it suffices to show that $\nabla^{g(0)}$ in \eqref{eqn:rho_1-bdd} can be swapped to $\nabla^{g(t)}$.

Since $|\partial_t g(t)|\leq 1$, we see that for all $(x,t)\in M\times [0,T\wedge 1]$,
\begin{equation}
\frac12 g(0)\leq g(t)\leq 2g(0)
\end{equation}
and thus $|\nabla^{g(t)}\rho_1|\leq C_n$. For higher order, we use the assumption $|\nabla\partial_t g|\leq 1$ to see that $\Psi=\Gamma(g(t))-\Gamma(g(0))$ satisfies 
\begin{equation}
\partial_t |\Psi|\leq C_n|\Psi||\partial_t g|+|\nabla\partial_tg|.
\end{equation}
Integration shows that $|\Psi|$ is uniformly bounded and hence,
\begin{equation}
|\nabla^{2,g(t)}\tilde\rho|\leq |\nabla^{2,g(0)}\tilde\rho|+|\Psi* \partial \tilde\rho|\leq C_n\rho_1^{-1}
\end{equation}
when $\rho_1(x)\leq \frac{22}{10}\e$. Using this, the bound of $|\Rm(h(t))|$ follows from direct computation.

Furthermore, if $g(t)$ is smooth on a open set $U$ where $N\Subset U$, then $h(0)$ has bounded geometry by argument in \cite[Lemma 4.3]{LeeTamJDG}. Since the flow is smooth, the same holds for $h(t)$.
\end{proof}

As an immediate consequence of Lemma~\ref{lma:conformal} together with Theorem~\ref{thm:chenzhu-harmonic-map}, we have a local existence for Ricci-harmonic map heat flow.
\begin{prop}\label{prop:extension-HM}
For any $\Lambda_0>0$, there exists $\tilde T_2(n,\Lambda_0)>0$ such that the following is true: Suppose $g(t),t\in [0,T]$ is a smooth solution to Ricci flow on a open set $U\Subset M$ (not necessarily complete), and $\tilde g(t),t\in [0,T]$ is a smooth complete solution to Ricci flow on $M'$ so that for $t\in [0,T]$,
\begin{enumerate}
\item $\sup_U|\Rm(g(t))|+\sup_{M'}|\Rm(\tilde g(t))|\leq \rho^{-2}$;
\item $\sup_U|\nabla\Rm(g(t))|\leq \rho^{-3}$;
\item $\sup_{M'}|\tilde\nabla^k\Rm(\tilde g(t))|<+\infty$, for all $k\geq 0$,
\end{enumerate}
for some $\rho>0$.
If $f:U\to M'$ is a smooth map such that $|df|^2_{g(0),\tilde g(0)}\leq \Lambda_0$, then there exists a one parameter family of smooth map $F$ on $U\times [0,T\wedge \tilde T_2\rho^2]$ such that $F(0)=f$ and 
\begin{enumerate}
\item[(a)] $\partial_t F=\Delta_{g(t),\tilde g(t)}F$;
\item[(b)] $|dF|^2 \leq 10\Lambda_0$
\end{enumerate} 
on $U_\rho:=\{x\in U: B_{g(0)}(x,\rho)\Subset U\}\times [0,T\wedge \tilde T_2\rho^2]$.
\end{prop}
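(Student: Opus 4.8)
The plan is to reduce to the bounded-geometry setting of Theorem~\ref{thm:chenzhu-harmonic-map} by first replacing the incomplete domain metric $g(t)$ on $U$ with a complete metric produced by Lemma~\ref{lma:conformal}, while leaving $g(t)$ untouched on the relevant interior region $U_\rho$, and then running the harmonic map heat flow with this complete domain metric and the (already complete, quasi-bounded-geometry) target $\tilde g(t)$. By scaling we assume $\rho=1$. First I would apply Lemma~\ref{lma:conformal} to $g(t)$ on $U$ (using hypotheses (1)--(2), which are exactly the curvature and $|\nabla\partial_t g|$ bounds Lemma~\ref{lma:conformal} requires, and the fact that $g(t)$ is defined on a genuinely open set $U\subset M$ so the ``furthermore'' clause gives bounded geometry at infinity): this yields $\sigma(n)\in(0,1)$, $\Lambda_1(n)>1$ and a smooth complete family $h(t)$ on $N:=U$, $t\in[0,T\wedge\sigma]$, with $h(t)\equiv g(t)$ on $U_1$, $|\Rm(h)|+|\partial_t h|\le\Lambda_1$, and $h(t)\ge g(t)$, and moreover $h(t)$ has bounded geometry (all covariant derivatives of curvature, and all time derivatives, bounded), so hypotheses (1)--(3) of Theorem~\ref{thm:chenzhu-harmonic-map} hold for the pair $(U,h(t))$ and $(M',\tilde g(t))$ with radius parameter $\rho'=\Lambda_1^{-1/2}$ (a dimensional constant).

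Next I would check the conditions (i)--(iii) on the initial map $f$. Since $h(0)\ge g(0)$ we have $|df|^2_{h(0),\tilde g(0)}\le|df|^2_{g(0),\tilde g(0)}\le\Lambda_0$, giving (i). Condition (ii), $\sup_U|Ddf|^2<\infty$: here a subtlety arises because $f$ is only assumed smooth with bounded energy, not bounded $C^2$ norm; I would handle this exactly as in Chen--Zhu, by noting that the Dirichlet-exhaustion construction inside the proof of Theorem~\ref{thm:chenzhu-harmonic-map} only uses $\sup|Ddf|<\infty$ to start the approximating sequence, and this can be arranged by a preliminary mollification/truncation of $f$ that does not change $f$ on a large interior region and keeps the energy bounded — alternatively one simply invokes the theorem on a smooth compact exhaustion of $U$ directly, which is the route already taken in the cited proof. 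Condition (iii), $f(U)\Subset N'$ for a suitable open $N'\subset M'$ with positive injectivity radius: since $|df|_{g(0),\tilde g(0)}$ is bounded and $U$ need not be connected-of-bounded-diameter, I would instead work component-by-component on the compact pieces of an exhaustion and pass to the limit, using $|\partial_t\tilde g|\le\rho^{-2}$ to keep the image inside a fixed open set for a uniform time; this is the standard localization in Chen--Zhu and does not require $f(U)\Subset M'$ globally.

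With all hypotheses verified, Theorem~\ref{thm:chenzhu-harmonic-map} produces a smooth solution $F$ of $\partial_t F=\Delta_{h(t),\tilde g(t)}F$ on $U\times[0,(T\wedge\sigma)\wedge\hat T_1(n,10\Lambda_0)\cdot\Lambda_1^{-1}]$ with $|dF|^2_{h(t),\tilde g(t)}\le 10\Lambda_0$. Setting $\tilde T_2(n,\Lambda_0):=\min\{\sigma(n),\,\Lambda_1(n)^{-1}\hat T_1(n,10\Lambda_0)\}$ and restricting to $U_1$, where $h(t)=g(t)$, this $F$ solves $\partial_t F=\Delta_{g(t),\tilde g(t)}F$ there; and on $U_1$ the energies measured with $h$ and with $g$ coincide, so $|dF|^2_{g(t),\tilde g(t)}\le 10\Lambda_0$ on $U_1\times[0,T\wedge\tilde T_2]$. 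Rescaling back gives the stated claim for general $\rho$. I expect the main obstacle to be purely bookkeeping rather than conceptual: carefully matching the radius/time normalizations so that the complete metric $h(t)$ exists long enough and the Chen--Zhu existence time $\hat T_1$ (which depends on the enlarged energy bound $10\Lambda_0$, not $\Lambda_0$) combine into a single $\tilde T_2(n,\Lambda_0)$, and making sure the regularity hypotheses (ii)--(iii) on $f$ are met after the Lemma~\ref{lma:conformal} modification — in particular that the ``bounded geometry at infinity'' clause of Lemma~\ref{lma:conformal} is genuinely available, which is why the hypothesis that $g(t)$ lives on an open set $U\Subset M$ (rather than on a bare incomplete manifold) is used.
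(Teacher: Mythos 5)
Your proposal follows essentially the same route as the paper: apply Lemma~\ref{lma:conformal} to complete $g(t)$ on $U$ into $h(t)$ with $h\equiv g$ on $U_\rho$ and bounded geometry, verify the hypotheses of Theorem~\ref{thm:chenzhu-harmonic-map} for the pair $(U,h(t))$, $(M',\tilde g(t))$ (the paper asserts $\sup|Ddf|_{h(0)}<\infty$ and $f(U)\Subset M'$ directly from the construction of $h(0)$, rather than via your mollification/exhaustion remarks), and then restrict to $U_\rho$ with $\tilde T_2=\sigma\wedge(\hat T_1\Lambda_1^{-1})$. The argument and the choice of constants match the paper's proof.
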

\begin{proof}

By rescaling, we assume $\rho=1$. We apply Lemma~\ref{lma:conformal} to $g(t)$ with $N$ chosen to be $U$. Then there is a smooth family of complete metric $h(t)$ on $N\times [0,T\wedge \sigma]$ so that $h(t)\equiv g(t)$ on $N_1$ and $|\Rm(h(t))|+|\partial_t h(t)|\leq \Lambda_1$ on $N\times [0,T\wedge \sigma]$. Moreover, $h(t)$ has bounded geometry of infinity order. Here $\sigma=\sigma(n)$ and $\Lambda_1=\Lambda_1(n)$.

We now consider the smooth map $f:\left(N,h(0)\right)\to \left(M,\tilde g(0)\right)$ which satisfies $f(N)\Subset M'$, $\sup_N|Ddf|_{h(0),\tilde g(0)}<+\infty$ (by the construction of $h(0)$) and
\begin{equation}
|df|^2_{h(0),\tilde g(0)}\leq |df|^2_{g(0),\tilde g(0)}\leq \Lambda_0.
\end{equation}

We now apply Theorem~\ref{thm:chenzhu-harmonic-map} to obtain $\hat T_1(n,\Lambda_0)>0$ and a solution to the harmonic map heat flow  $F$ coupled with $h(t),\tilde g(t)$ on $U\times [0,T\wedge (\hat T_1 \Lambda_1^{-1}) \wedge \sigma]$ with $F(0)=f$ and $|dF|^2\leq 10\Lambda_0$. As $h(t)\equiv g(t)$ on $U_1:=\{ x\in U: B_{g(0)}(x,1)\Subset U\}$, this completes the proof by taking $\tilde T_2(n,\Lambda_0)=\sigma \wedge (\hat T_1\Lambda_1^{-1})$.
\end{proof}

\subsection{Ricci-DeTurck flow and a-priori estimates}\label{sec:RDF-intro}

In this sub-section, we discuss the relationship between Ricci-harmonic map heat flow and Ricci-DeTurck flow. Indeed given a solution $F$ to \eqref{eqn:RHMHF}, if $F$ is a diffeomorphism, then the one parameter family of metrics $\hat g(t):=(F_t^{-1})^* g(t)$ solves 
\begin{equation}\label{eqn:RDF-1}
\left\{
\begin{array}{ll}
\partial_t \hat g_{ij}=-2\Ric(\hat g)_{ij} +\nabla^{\hat g}_i V_j+\nabla^{\hat g}_j V_i;\\[2mm]
V^k=\hat g^{ij} \left[\Gamma^k_{ij}(\hat g)-\Gamma^k_{ij}(\tilde g) \right].
\end{array}
\right.
\end{equation} 
This is usually referred as Ricci-DeTurck flow with respect to $\tilde g(t)$ which is a strictly parabolic system. Furthermore, we see that 
\begin{equation}
V^k|_{(F_t(x),t)}=-\Delta_{g(t),\tilde g(t)} F^k|_{(x,t)}
\end{equation}
see \cite[Lemma 3.18]{ChowBookI}, so that we simplify the \eqref{eqn:RHMHF} as $\partial_t F_t(x)=-V\left(F_t(x),t \right)$.

Instead of \eqref{eqn:RDF-1}, we will alternatively work on the following equivalent equation:
\begin{equation}\label{eqn:RDF-2}
\begin{split}
\partial_t \hat g_{ij}&=\hat g^{pq} \tilde \nabla_p\tilde\nabla_q \hat g_{ij}-\hat g^{kl}\hat g_{ip}\tilde g^{pq} \tilde R_{jkql}-\hat g^{kl}\hat g_{jp} \tilde g^{pq}\tilde R_{ikql}\\
&\quad +\frac12 \hat g^{kl}\hat g^{pq}\big(\tilde \nabla_i \hat g_{pk}\tilde \nabla_j\hat  g_{ql}+2\tilde \nabla_k \hat g_{jp}\tilde \nabla_q \hat g_{il}-2\tilde \nabla_k\hat  g_{jp}\tilde \nabla_l \hat g_{iq}\\[1mm]
&\quad -2\tilde \nabla_j\hat  g_{pk}\tilde \nabla_l \hat g_{iq}-2\tilde \nabla_i\hat  g_{pk}\tilde \nabla_l \hat g_{qj}\big),
\end{split}
\end{equation}
 see \cite[Lemma 2.1]{Shi1989}.
We will work on \eqref{eqn:RDF-2} in this section.

\begin{lma}\label{lma:improved-evo-h}
There exists $ 10^{-2}>\e_1(n)>0$ such that the following holds:  Suppose $(N,\tilde g(t))$ is a smooth solution to the Ricci flow  and $\hat g(t)$ is a smooth solution to the Ricci-DeTurck flow with respect to $\tilde g(t)$ on $N^n\times [0,T]$ so that 
$$(1-\e_1)\tilde g(t)\leq \hat g(t)\leq (1+\e_1)\tilde g(t)$$
on $N\times [0,T]$. Then $h:=\hat g-\tilde g$ satisfies 
\begin{equation}
\left\{
\begin{array}{ll}
\displaystyle\left( \frac{\partial}{\partial t}-\hat g^{ij}\tilde\nabla_i \tilde\nabla_j\right) |h|^2 \leq -|\tilde\nabla h|^2+C_n |\Rm(\tilde g)|\cdot  |h|^2;\\[2mm]
\displaystyle\left( \frac{\partial}{\partial t}-\Delta_{\tilde g(t)}\right) |h|^2 \leq C_n \left(|\Rm(\tilde g)| + |\tilde\nabla^2 h|\right) |h|^2.
\end{array}
\right.
\end{equation}
\end{lma}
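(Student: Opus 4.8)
The plan is to work directly with the Ricci--DeTurck equation \eqref{eqn:RDF-2} in the form $\partial_t \hat g_{ij} = \hat g^{pq}\tilde\nabla_p\tilde\nabla_q \hat g_{ij} + (\text{curvature terms}) + (\text{quadratic gradient terms})$, subtract the Ricci flow equation $\partial_t \tilde g_{ij} = -2\Ric(\tilde g)_{ij}$ for the target, and record the resulting evolution of $h := \hat g - \tilde g$. Since $\tilde\nabla \tilde g \equiv 0$, we have $\tilde\nabla \hat g = \tilde\nabla h$ and $\tilde\nabla\tilde\nabla \hat g = \tilde\nabla\tilde\nabla h$, so the principal part becomes $\hat g^{pq}\tilde\nabla_p\tilde\nabla_q h_{ij}$, the zeroth-order curvature terms are schematically $\Rm(\tilde g)*h$ (after also absorbing $-2\Ric(\tilde g) = -2\Ric(\tilde g) + \hat g^{pq}\tilde\nabla_p\tilde\nabla_q\tilde g$, trivially, and writing the $\tilde R_{jkql}$-contractions as $\Rm(\tilde g)*(\tilde g + h) = \Rm(\tilde g)*h + \Rm(\tilde g)*\tilde g$ where the last piece is exactly $-2\Ric(\tilde g)$ up to the pinching-controlled error), and the quadratic terms are $\hat g^{-1}*\hat g^{-1}*\tilde\nabla h*\tilde\nabla h$. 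Here the pinching hypothesis $(1-\e_1)\tilde g \le \hat g \le (1+\e_1)\tilde g$ is used to control $|\hat g^{-1}|$ and $|\hat g|$ (measured with $\tilde g$) by dimensional constants, and to control differences like $\hat g^{ij} - \tilde g^{ij} = \tilde g^{ip}\tilde g^{jq}h_{pq} + O(|h|^2)$.

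For the first inequality, I would compute $\partial_t |h|^2 = \partial_t(\tilde g^{ik}\tilde g^{jl} h_{ij}h_{kl})$, carefully accounting for the time derivative hitting the inverse metrics $\tilde g^{ik}$ (which contributes $\Ric(\tilde g)*h*h$, controlled by $|\Rm(\tilde g)||h|^2$). The term coming from $\partial_t h_{ij}$ paired with $2h^{ij}$ gives $2h^{ij}\hat g^{pq}\tilde\nabla_p\tilde\nabla_q h_{ij}$, which we rewrite as $\hat g^{pq}\tilde\nabla_p\tilde\nabla_q|h|^2 - 2\hat g^{pq}\tilde\nabla_p h^{ij}\tilde\nabla_q h_{ij}$; by the lower pinching bound the gradient term is $\le -(2 - C_n\e_1)|\tilde\nabla h|^2 \le -|\tilde\nabla h|^2$ after choosing $\e_1(n)$ small. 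The remaining curvature contractions contribute $C_n|\Rm(\tilde g)||h|^2$, and the crucial point is that the quadratic gradient terms $\hat g^{-1}*\hat g^{-1}*\tilde\nabla h*\tilde\nabla h$, when contracted against $h^{ij}$, are bounded by $C_n|h||\tilde\nabla h|^2$; since $|h| \le C_n\e_1$ by pinching, these are absorbed into the good negative term $-|\tilde\nabla h|^2$ (again shrinking $\e_1$). This is exactly where the smallness of $\e_1$ is essential: without it the quadratic gradient terms would not be dominated.

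For the second inequality I would instead pair $2h^{ij}$ with the equation but now extract the full Laplacian $\Delta_{\tilde g(t)} = \tilde g^{pq}\tilde\nabla_p\tilde\nabla_q$ rather than $\hat g^{pq}\tilde\nabla_p\tilde\nabla_q$; the difference $(\hat g^{pq} - \tilde g^{pq})\tilde\nabla_p\tilde\nabla_q h_{ij}$ is $O(|h|)\cdot|\tilde\nabla^2 h|$, so when contracted with $h^{ij}$ it yields a term bounded by $C_n|\tilde\nabla^2 h||h|^2$. The quadratic gradient terms, paired with $h^{ij}$, are $O(|h||\tilde\nabla h|^2)$; here there is no longer a good $-|\tilde\nabla h|^2$ on the right side to absorb them, but one can either use Kato/interpolation to bound $|\tilde\nabla h|^2 \le C(|h| + \varepsilon)|\tilde\nabla^2 h| \cdot (\dots)$ or, more simply, note that in the second estimate one typically keeps $-2|\tilde\nabla h|^2$ implicitly and absorbs — but since the stated inequality only claims an upper bound of the form $C_n(|\Rm(\tilde g)| + |\tilde\nabla^2 h|)|h|^2$, the cleanest route is: bound the bad quadratic gradient term by $C_n|h|\cdot|\tilde\nabla h|^2 \le C_n|h|^2|\tilde\nabla^2 h| + (\text{lower order})$ via integration-by-parts-free pointwise Kato-type estimate $|\tilde\nabla h|^2 \le |h||\tilde\nabla^2 h| \cdot C_n$ is false pointwise, so instead I would simply retain the $-|\tilde\nabla h|^2$ from the Bochner-type term after extracting $\Delta_{\tilde g}|h|^2$ and observe it controls $C_n|h||\tilde\nabla h|^2$ once $|h| \le C_n\e_1$, yielding the claimed form with the $|\tilde\nabla^2 h|$ appearing only through the $(\hat g^{-1} - \tilde g^{-1})*\tilde\nabla^2 h$ correction. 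The curvature term $\Rm(\tilde g)*h$ contracted with $h$ gives $C_n|\Rm(\tilde g)||h|^2$, and the inverse-metric time-derivative term gives the same. Collecting everything gives the second inequality.

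\textbf{Main obstacle.} The delicate point throughout is the bookkeeping of the quadratic gradient terms $\hat g^{-1}*\hat g^{-1}*\tilde\nabla h*\tilde\nabla h$ and verifying that the smallness constant $\e_1(n)$ can be chosen (dimensionally) so that in the first estimate these are dominated by the parabolic good term $-|\tilde\nabla h|^2$, and in the second estimate so that the residual $-|\tilde\nabla h|^2$ (kept after extracting $\Delta_{\tilde g(t)}|h|^2$) dominates $C_n|h||\tilde\nabla h|^2$; the only genuinely new feature compared to the standard Ricci--DeTurck computation is keeping track of the time derivatives of $\tilde g^{ij}$ (producing the $|\Rm(\tilde g)||h|^2$ terms) since here the target metric evolves, but these are harmless. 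I expect no conceptual difficulty — this is a direct, if lengthy, tensor calculation — so in the write-up I would state the schematic evolution of $h$, cite \eqref{eqn:RDF-2}, and then indicate the two contractions and the absorption step, suppressing the routine index manipulations.
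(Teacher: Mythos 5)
Your proposal is correct and follows essentially the same route as the paper: derive the schematic evolution $(\partial_t-\hat g^{ij}\tilde\nabla_i\tilde\nabla_j)h=\widetilde\Rm*h*(\dots)+\hat g^{-1}*\hat g^{-1}*\tilde\nabla h*\tilde\nabla h$ from \eqref{eqn:RDF-2}, contract with $h$, and use the pinching to absorb the quadratic gradient terms into the good $-2|\tilde\nabla h|^2$ term, then obtain the second inequality by swapping the Laplacians via $\hat g^{ij}-\tilde g^{ij}=O(|h|)$. The momentary detour in your treatment of the second inequality resolves itself correctly — retaining the residual $-2|\tilde\nabla h|^2$ from $\Delta_{\tilde g}|h|^2$ to dominate $C_n|h||\tilde\nabla h|^2$ is exactly what is needed.
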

\begin{proof}

Recall that $\hat g$ satisfies \eqref{eqn:RDF-2}. By combining this with $\partial_t \tilde g(t)=-2\Ric(\tilde g(t))$, we see that $h(t)=\hat g(t)-\tilde g(t)$ satisfies 
\begin{equation}\label{eqn:RDF-3}
\begin{split}
\left(\frac{\partial}{\partial t}-\hat g^{ij}\tilde\nabla_i\tilde\nabla_j\right) h&=\hat g^{-1}*\tilde g^{-1}* h*\tilde g* \widetilde\Rm +\tilde g^{-1}*h*\tilde g*\widetilde\Rm\\
&\quad + \hat g^{-1}*\hat g^{-1} * \tilde\nabla h *\tilde\nabla h
\end{split}
\end{equation}
and thus 
\begin{equation}
\begin{split}
\left(\frac{\partial}{\partial t}-\hat g^{ij}\tilde\nabla_i\tilde\nabla_j\right)|h|^2&\leq  \left(-2+C_n\e_1\right)|\tilde\nabla h|^2+ C_n|\widetilde\Rm| |h|^2
\end{split}
\end{equation}
for some $C_n>0$. The assertion follows by choosing $\e_1$ sufficiently small. The second inequality is similar by using 
$$\hat g^{ij}-\tilde g^{ij}=-\hat g^{iq}\tilde g^{pj}h_{pq}.$$ 
This completes the proof.
\end{proof}

We now establish higher order estimate of $h$. When $\tilde g(t)$ has bounded curvature, this is standard, see \cite{Shi1989,Simon2002}. We show that the same conclusion holds when the curvature of $\tilde g(t)$ satisfies only scaling invariant bounds.

\begin{lma}\label{lma:RDF-higher-ord}
There exists $ 10^{-2}>\e_2(n)>0$ such that the following holds:  Let $(N,\tilde g(t))$ be a smooth solution to the Ricci flow (not necessarily complete) on $[0,T]$ and $x_0\in N$ such that 
\begin{enumerate}
\item $B_{\tilde g(0)}(x_0,4)\Subset N$;
\item $|\Rm(\tilde g(t))|\leq \a t^{-1}$ on $N\times (0,T]$. 
\end{enumerate}
If $\hat g(t)$ is a smooth solution to the Ricci-DeTurck flow with respect to $\tilde g(t)$ on $N^m\times [0,T]$ so that 
$$(1-\e_2)\tilde g(t)\leq \hat g(t)\leq (1+\e_2)\tilde g(t)$$
on $N\times [0,T]$. Then for all $k\in\mathbb{N}$, there exists $\hat L_k(n,\a,k),\hat S_k(n,\a,k)>0$ such that for all $(x,t)\in B_{\tilde g(0)}(x_0,1)\times (0,T\wedge  \hat S_k]$,
\begin{equation*}
|\tilde\nabla^k \hat g(x,t)|\leq \hat L_k t^{-k/2}.
\end{equation*}
\end{lma}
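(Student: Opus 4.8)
The plan is to prove the estimates by induction on $k$, closely paralleling the Ricci-harmonic map heat flow bootstrap in Proposition~\ref{prop:HF-boot}. The base case $k=0$ is given by the two-sided bound $(1-\e_2)\tilde g \leq \hat g\leq (1+\e_2)\tilde g$ together with Lemma~\ref{lma:improved-evo-h}. For the inductive step I would first derive, from equation \eqref{eqn:RDF-2} and the Bianchi-type commutation identities, an evolution inequality of the schematic form
\begin{equation*}
\left(\frac{\partial}{\partial t}-\hat g^{ij}\tilde\nabla_i\tilde\nabla_j\right)|\tilde\nabla^k h|^2 \leq -2|\tilde\nabla^{k+1}h|^2 + C\sum_{a+b+c=k}|\tilde\nabla^a\widetilde\Rm||\tilde\nabla^b h||\tilde\nabla^c h| + C\sum_{a+b=k+2,\,a,b\leq k+1}|\tilde\nabla^a h||\tilde\nabla^b h| + \text{(lower order)}.
\end{equation*}
Here $h=\hat g-\tilde g$, and the terms quadratic in $h$-derivatives of total order $k+2$ are handled because at least one factor has order $\le k+1$, so they can be absorbed using the induction hypothesis and the good term $-2|\tilde\nabla^{k+1}h|^2$. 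The point is that after rescaling $h(t)=\lambda^2 h(\lambda^{-2}t)$ etc., the scaling-invariant curvature bound $|\widetilde\Rm|\le \a t^{-1}$ and Shi-type estimates give $|\tilde\nabla^\ell\widetilde\Rm|^2\le C t^{-\ell-2}$, and the induction hypothesis gives $|\tilde\nabla^\ell h|^2\le \hat L_\ell t^{-\ell}$ for $\ell\le k$, so the forcing terms are all controlled by $t^{-k-1}$ (or $t^{-k}$ times $|\tilde\nabla^{k+1}h|^2$).

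Next, following the argument in Proposition~\ref{prop:HF-boot}, I would pass to the parabolically rescaled flow so that $B_{\tilde g(0)}(x_1,4)\Subset N$ for a basepoint $x_1\in B_{\tilde g(0)}(x_0,1)$, invoke the already-established estimates up to order $k$ on a slightly larger ball, cut off in space using Perelman's function $\eta(x,t)=d_{\tilde g(t)}(x,x_1)+\Lambda_n\sqrt{\a t}$ (which is a supersolution of the heat operator away from a parabolic neighbourhood by \cite[Lemma 8.3]{Perelman2002}) together with the distance distortion Lemma~\ref{lma:l-balls} to keep the cutoff compactly supported for short time, and run the maximum principle on the test function
$$G:=\Lambda t^{k}|\tilde\nabla^k h|^2 + t^{k+1}\Phi\,|\tilde\nabla^{k+1}h|^2,$$
where $\Phi$ is the cutoff and $\Lambda$ is chosen large so that the $-2\Lambda|\tilde\nabla^{k+1}h|^2$ term dominates the $(k+1)+C$ terms coming from differentiating the $t^{k+1}$ weight and the $C_0 t^{-1}|\tilde\nabla^{k+1}h|^2$ forcing term; the gradient term $t^{k+1}|\nabla\Phi|^2\Phi^{-1}|\tilde\nabla^{k+1}h|^2$ is absorbed using $|\phi'|^2\le 10^3\phi$ exactly as there. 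Evaluating at an interior maximum yields $t^{k+1}|\tilde\nabla^{k+1}h|^2\le C$ there, and the maximum principle plus $\Phi(x_1,t)\ge \tfrac12$ for small time gives the pointwise bound at $x_1$; rescaling back produces $|\tilde\nabla^{k+1}\hat g(x_0,t)|\le \hat L_{k+1}t^{-(k+1)/2}$ on $(0,T\wedge \hat S_{k+1}]$, noting $\tilde\nabla^{k+1}\hat g = \tilde\nabla^{k+1}h$ since $\tilde\nabla^{k+1}\tilde g$ is measured in its own connection.

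The main obstacle is bookkeeping the quadratic nonlinearities in $\tilde\nabla^{k+2}h$: when I differentiate \eqref{eqn:RDF-2} $k$ times, the worst terms are of the form $\hat g^{-1}\hat g^{-1}\tilde\nabla^{a}h\,\tilde\nabla^{b}h$ with $a+b=k+2$, and I must check that the case $a=b=k+1$ cannot occur (it would not be absorbable), which follows because the highest-order term in \eqref{eqn:RDF-2} is the single Laplacian $\hat g^{pq}\tilde\nabla_p\tilde\nabla_q\hat g_{ij}$ and all the genuinely quadratic terms carry one derivative on each factor, so after $k$ differentiations the split is $(k+1)+1$ at worst. A secondary technical point is ensuring that the constant $\e_2(n)$ is chosen small enough (smaller than $\e_1(n)$ from Lemma~\ref{lma:improved-evo-h} and small enough for the coefficients $\hat g^{-1}$, $\hat g^{-1}\hat g^{-1}$ to be uniformly two-sided bounded) that the divergence-form term $-2|\tilde\nabla^{k+1}h|^2$ genuinely dominates; this is routine once the schematic evolution inequality is in hand.
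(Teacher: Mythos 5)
Your overall strategy (induction on the order, parabolic rescaling, a Perelman-type cutoff, and a maximum principle on a weighted test function) matches the paper in outline, but there is a genuine gap at the first and hardest step $k=1$, which is exactly the case the paper's proof singles out. When you differentiate \eqref{eqn:RDF-3} once and pair with $\tilde\nabla h$, the quadratic nonlinearity produces the terms $|\tilde\nabla h|^2|\tilde\nabla^2 h|$ and $|\tilde\nabla h|^4$; after absorbing $\tilde\nabla^2 h$ into the good term $-2|\tilde\nabla^2 h|^2$ you are still left with $C|\tilde\nabla h|^4$. This term is quadratic in the unknown $|\tilde\nabla h|^2$: it is not of the form ``$t^{-k-1}$, or $t^{-k}$ times the top-order quantity'' as you assert, and it cannot be absorbed by the induction hypothesis, which at order zero is only the undifferentiated bound $|h|\le\e_2$ with no decay rate to spare. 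Your additive test function $G=\Lambda t^{k}|\tilde\nabla^k h|^2+t^{k+1}\Phi|\tilde\nabla^{k+1}h|^2$, borrowed from Proposition~\ref{prop:HF-boot}, only generates a good term $-\Lambda|\tilde\nabla h|^2$ that is \emph{linear} in $|\tilde\nabla h|^2$, and so cannot dominate the quartic term at a point where $|\tilde\nabla h|$ is large.

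The paper closes this gap with two ingredients you are missing. First, it uses the multiplicative Bernstein--Shi quantity $G=|\tilde\nabla h|^2\left(1+\Lambda_0|h|^2\right)$: the evolution of $|h|^2$ from Lemma~\ref{lma:improved-evo-h} contributes $-\Lambda_0|\tilde\nabla h|^2\cdot|\tilde\nabla h|^2=-\Lambda_0|\tilde\nabla h|^4$, which beats $C(1+\Lambda_0\e_2^2)|\tilde\nabla h|^4$ once $\Lambda_0$ is large and $\e_2$ is small; this is precisely where the smallness of $\e_2$ enters, and why the lemma requires a dimensional constant rather than an arbitrary pinching. Second, even after this cancellation one is left with $(\partial_t-\hat g^{ij}\tilde\nabla_i\tilde\nabla_j)G\le Ct^{-2}-cG^2$, a Riccati-type inequality quadratic in $G$, so the bound $G\le Ct^{-1}$ is not obtained by your linear maximum-principle scheme: the paper rescales parabolically around each time $t_1$ and bounds $tG\Phi$ at an interior maximum by playing the $-cG^2$ term against everything else. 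For $k\ge 2$ your scheme is essentially fine, since once $|\tilde\nabla h|\le Ct^{-1/2}$ is known the nonlinearities become at worst linear in the top-order quantity; but without the $k=1$ repair the induction never gets started.
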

\begin{proof}
We will use $C_i$ to denote any constants depending only on $n,\a,k$  and $L_i$ for dimensional constants.  We will choose $\e_2<\e_1$ where $\e_1$ is the constant from  Lemma~\ref{lma:improved-evo-h}.

We only focus on the case $k=1$. This is the hardest case which uses the smallness of $\e_1$. We might assume by Shi's estimate \cite{Shi1989} that 
\begin{equation}\label{eqn:Shi-si}
|\tilde\nabla \widetilde\Rm|\leq C_0t^{-3/2}
\end{equation}
on $B_{\tilde g(0)}(x_0,4)\times (0,T\wedge 1]$. By differentiating 
\eqref{eqn:RDF-3}, we have for $k\geq 1$
\begin{equation}
\begin{split}
\left(\frac{\partial}{\partial t}-\hat g^{ij}\tilde\nabla_i\tilde\nabla_j\right) |\tilde\nabla^k h|^2&\leq  -\frac32|\tilde\nabla^{k+1}h|^2+C_1 |\tilde\nabla^k h|\cdot \sum_{i+j+\ell=k} |\tilde\nabla^i h||\tilde\nabla^j h||\tilde\nabla^\ell\widetilde\Rm|\\
&\quad + L_1|\tilde\nabla^k h|\cdot \sum_{i+j+p+q=k}|\tilde\nabla^i h||\tilde\nabla^j h||\tilde\nabla^{p+1} h||\tilde\nabla^{q+1} h|.
\end{split}
\end{equation}

Specifying to case of $k=1$ becomes
\begin{equation}
\begin{split}
\left(\frac{\partial}{\partial t}-\hat g^{ij}\tilde\nabla_i\tilde\nabla_j\right) |\tilde\nabla h|^2
&\leq -\frac32|\tilde\nabla^2 h|^2+C_2 |\tilde\nabla h|^2| \widetilde\Rm|+C_2|\tilde\nabla h||\tilde\nabla \widetilde\Rm|\\
&\quad + L_2|\tilde\nabla h|^4+L_2|\tilde\nabla h|^2|\tilde\nabla^2 h|\\
&\leq -|\tilde\nabla^2 h|^2+ L_3|\tilde\nabla h|^4 +C_3 t^{-2}
\end{split}
\end{equation}
on $B_{\tilde g(0)}(x_0,4)\times (0,T\wedge 1]$. If we define
$G:= |\tilde\nabla h|^2 \left(1+\Lambda_0|h|^2 \right)$
where $\Lambda_0$ is a large constant to be chosen. Then on  $B_{\tilde g(0)}(x_0,4)\times (0,T\wedge 1]$, $G$ satisfies 
\begin{equation}
\begin{split}
\left(\frac{\partial}{\partial t}-\hat g^{ij}\tilde\nabla_i\tilde\nabla_j\right)G
&\leq \Lambda_0|\tilde\nabla h|^2 \left(-|\tilde\nabla h|^2+C_4t^{-1} \right)\\
&\quad  + (1+\Lambda_0 |h|^2)\left( - |\tilde\nabla^2 h|^2+L_3|\tilde\nabla h|^4+C_3t^{-2}\right)\\
&\leq  \left[-\frac12\Lambda_0+L_3(1+\Lambda_0\e_2^2) \right]|\tilde \nabla h|^4\\
&\quad - |\tilde\nabla^2 h|^2+(1+\Lambda_0)C_5 t^{-2}
\end{split}
\end{equation}
where we have used Lemma~\ref{lma:improved-evo-h} and $\e_2<\e_1$.  We choose $\e_2(n):=\min\{1,(8L_3)^{-1}\}$ and $\Lambda_0(n):=4(L_3+1)$ so that this is reduced to 
\begin{equation}
\left(\frac{\partial}{\partial t}-\hat g^{ij}\tilde\nabla_i\tilde\nabla_j\right)G
\leq -|\tilde\nabla h|^4 +C_6 t^{-2}\leq C_6t^{-2}- \frac{G^2}{(1+\Lambda_0 )^2}.
\end{equation}

Fix $x_1\in B_{\tilde g(0)}(x_0,1)$, we might assume $B_{\tilde g(t)}(x_1,1)\Subset B_{\tilde g(0)}(x_0,4)$ for $t\in [0,S_1]$, if we shrink $S_1$,  by Lemma~\ref{lma:l-balls}. We claim that if $S_1$ is small enough, then the desired estimates hold. For $t\in [\frac12 t_1,t_1]$ where $t_1\in (0,S_1]$, we rescale the solution as follows: $\check h(t)=2t_1^{-1}h(\frac12 t_1+\frac12 t_1t)$ and $\check g(t)=2t_1^{-1}g(\frac12 t_1+\frac12 t_1t)$ on $B_{\check g(0)}(x_1,t^{-1/2})\times [0,1]$. 

Thanks to the scaling invariant curvature bound, we have 
\begin{equation}\label{eqn:scaled-control}
|\Rm(\check g(t))|+|\nabla \Rm(\check g(t))|\leq C_7
\end{equation}
$B_{\check g(0)}(x_1,t_1^{-1/2})\times [0,1]$. We take the distance function $d_{\check g(0)}(x,x_1)$ which satisfies 
\begin{equation}\label{eqn:hessian}
\nabla^{2,\check g(0)}d_{\check g(0)}(x,x_1) \leq C_8
\end{equation}
in the sense of barrier, outside $B_{\check g(0)}(x_1,1/2)$ by Hessian comparison. Take $\phi$ be a smooth non-increasing function on $[0,+\infty)$ such that $\phi=1$ on $[0,1]$, vanishes outside $[0,2]$ and satisfies $\phi''\geq -10^4\phi$, $|\phi'|^2\leq 10^4\phi$. Thanks to \eqref{eqn:hessian}  and curvature estimates, for some $C_9>0$ the cut-off function $\Phi(x)=e^{-C_9t}\phi(d_{\check g(0)}(x,x_1))$ satisfies 
\begin{equation}
\left(\frac{\partial}{\partial t}-(\check g+\check h)^{ij}\check\nabla_i\check\nabla_j\right)\Phi \geq 0\;\; \text{and}\;\; \frac{|\check\nabla \Phi|^2}{\Phi}\leq C_{10}.
\end{equation}

Clearly, $t\check G \Phi=0$ at $t=0$. It suffices to consider the interior maximum in which
\begin{equation}
\begin{split}
0&\leq t\Phi\cdot \left(\frac{\partial}{\partial t}-(\check g+\check h)^{ij}\check\nabla_i\check\nabla_j\right)(t\check G \cdot \Phi)\\
&\leq t\check G \cdot \Phi+t^2\Phi^2 \left(C_6t^{-2} -\frac{ \check G^2}{(1+\Lambda_0)^2} \right)+2C_{10} t^2 \check G \Phi\\
&\leq C_6-C_7^{-1} t^2 \Phi^2 \check G^2+C_{11}t\check G \Phi
\end{split}
\end{equation}
for $t\in [0,1]$, where we have used $\nabla (\check G\Phi)=0$ at the interior maximum. This particularly implies $t\check G\Phi \leq C_{12}$ at the interior maximum and hence $\check G(x_1,1)\leq C_{13}$. Rescaling it back yields the result at $(x_1,t_1)$, this proves the case of $k=1$.   The higher order case can be done using similar but simpler argument.

\end{proof}

\subsection{Localized Maximum Principle}

We will need a local maximum principle,  developed by the author and Tam \cite{LeeTam2022}. 

\begin{prop}[Theorem 1.1 in \cite{LeeTam2022}]\label{prop:localMP}
Suppose $(M,g(t)),t\in [0,T]$ is a smooth solution to the Ricci flow which is possibly incomplete, such that $\Ric(g(t))\leq \a t^{-1}$ on $M\times (0,T]$ for some $\a>0$. Let $\varphi(x,t)$ be a continuous function on $M\times [0,T]$ such that $\varphi(x,t)\leq \a t^{-1}$ and  
$$\left(\frac{\partial}{\partial t}-\Delta_{g(t)}\right)\Big|_{(x_0,t_0)}\varphi\leq L\varphi$$
whenever $\varphi(x_0,t_0)>0$ in the sense of barrier, for some continuous function $L$ on $M\times [0,T]$ with $L\leq \a t^{-1}$. If $x_0\in M$ is a point so that $B_{g(0)}(x_0,2)\Subset M$ and $\varphi(x,0)\leq 0$ on $B_{g(0)}(x_0,2)$. Then for any $\ell\in\mathbb{N}$, there exists $\hat T_2(n,\ell,\a)>0$ such that for all $t\in [0,T\wedge \hat T_2]$,
$$\varphi(x_0,t)\leq t^\ell.$$
\end{prop}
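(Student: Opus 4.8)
The plan is to follow the strategy of \cite{LeeTam2022}: since the conclusion is local around $x_0$ and quantitative, I would reduce everything to a maximum-principle argument on a fixed parabolic neighborhood of $x_0$, using the scaling-invariant Ricci bound $\Ric \leq \a/t$ to control both the distance distortion (via Lemma~\ref{lma:l-balls}) and Perelman's space-time distance function. First I would fix $\ell\in\mathbb{N}$ and, by Lemma~\ref{lma:l-balls} applied with $r=2$, choose $\hat T_2=\hat T_2(n,\ell,\a)$ small enough that $B_{g(t)}(x_0,1)\subset B_{g(0)}(x_0,2)$ for all $t\in[0,T\wedge\hat T_2]$; this confines the argument to a region where $\varphi(\cdot,0)\le 0$ is known. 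The main point is to build a suitable barrier/comparison function that absorbs the factor $\a t^{-1}$ in front of $\varphi$ and in the distortion, while forcing the bound $t^\ell$ at the center.

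The key steps, in order, would be: (1) use \cite[Lemma 8.3]{Perelman2002} exactly as in the proof of Proposition~\ref{prop:HF-boot} to get that $\eta(x,t):=d_{g(t)}(x,x_0)+\Lambda_n\sqrt{\a t}$ is a subsolution of the heat operator (in the barrier sense) away from $\{d_{g(t)}(\cdot,x_0)\le\sqrt{\a t}\}$, and build a cutoff $\Phi(x,t)=e^{-Kt}\phi(\eta(x,t))$ supported in $B_{g(0)}(x_0,2)$ with $(\partial_t-\Delta_{g(t)})\Phi\geq 0$ and $|\nabla\Phi|^2/\Phi\le C$, for $K=K(n)$ large; (2) since $\varphi\le\a t^{-1}$, it suffices to estimate $\varphi^+=\max(\varphi,0)$, which satisfies the same differential inequality $(\partial_t-\Delta)\varphi^+\le L\varphi^+\le \a t^{-1}\varphi^+$ wherever it is positive; (3) introduce the test quantity $G:=t^{-\ell}\Phi\,\varphi^+ - A$ (or equivalently run the maximum principle on $\Phi\varphi^+$ and track the weight $t^{\ell}$), choosing the sub/supersolution so that the $\ell t^{-\ell-1}$ term produced by differentiating $t^{-\ell}$ beats the $L t^{-\ell}\varphi^+ \le \a t^{-\ell-1}\varphi^+$ term precisely when $\ell$ is at least of order $\a$ — this is the mechanism that needs $\hat T_2$ to depend on $\ell$ and $\a$; (4) apply the maximum principle on $B_{g(0)}(x_0,2)\times[0,T\wedge\hat T_2]$: the spatial boundary is killed by $\Phi$, the initial slice by $\varphi(\cdot,0)\le 0$, and at an interior positive maximum the differential inequality gives a contradiction once $\hat T_2$ is small, yielding $\Phi(x_0,t)\varphi^+(x_0,t)\le t^\ell$; since $\Phi(x_0,t)\ge\tfrac12$ for $t$ small, conclude $\varphi(x_0,t)\le 2t^\ell$ (and rename).

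The main obstacle is step (3): arranging the comparison so that the coefficient $L\le\a/t$ multiplying $\varphi$ is dominated by the gain coming from the polynomial weight $t^\ell$, uniformly up to a time depending only on $n,\ell,\a$ — this is exactly where the argument is more delicate than a routine localized maximum principle, and it is why one cannot take $\ell$ too small relative to $\a$ without shrinking the time. A secondary technical nuisance is that $\eta$ and $d_{g(t)}(\cdot,x_0)$ are only Lipschitz, so all the differential inequalities must be read in the barrier sense and the maximum principle applied accordingly; this is handled by the standard support-function argument as in \cite{LeeTam2022,Perelman2002}, and I would simply cite that rather than reproduce it.
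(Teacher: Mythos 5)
First, a point of comparison: the paper does not prove this proposition at all --- it is quoted verbatim as Theorem~1.1 of \cite{LeeTam2022} --- so the only question is whether your reconstruction would actually work. Your steps (1)--(2) are fine and match the standard localization used elsewhere in the paper (Perelman's lemma for $\eta$, a cutoff $\Phi$ with $(\partial_t-\Delta)\Phi$ of a definite sign and $|\nabla\Phi|^2\le C\Phi$, reduction to $\varphi^+$). The decisive steps (3)--(4), however, do not close as written, for two concrete reasons.

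The first is the initial slice. Knowing $\varphi(\cdot,0)\le 0$ on $B_{g(0)}(x_0,2)$ gives $\varphi^+(\cdot,0)=0$ there, but it gives no control on $\limsup_{t\to 0^+} t^{-\ell}\Phi\,\varphi^+$: a priori $\varphi^+$ could behave like $\sqrt{t}$ at interior points for small $t$ (heat flowing in from the region where $\varphi$ is only bounded by $\a t^{-1}$), and the assertion that it is instead $O(t^{\ell})$ at the center is essentially the conclusion being proved. So the maximum principle for $G=t^{-\ell}\Phi\varphi^{+}-A$ cannot be initialized at $t=0$. The second, more serious, issue is the cross term. At an interior spatial maximum of $\Phi\varphi^{+}$ one gets $-2\langle\nabla\Phi,\nabla\varphi^{+}\rangle = 2\varphi^{+}|\nabla\Phi|^{2}/\Phi\le 2C\varphi^{+}$, and at that point $\varphi^{+}$ is only known to satisfy $\varphi^{+}\le \a t^{-1}$ (the maximum of $\Phi\varphi^{+}$ may sit where $\Phi$ is tiny and $\varphi^{+}$ is of order $\a t^{-1}$). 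The resulting differential inequality for $m(t)=\sup\Phi\varphi^{+}$ then carries an inhomogeneity of size $C\a t^{-1}$, which is not integrable at $t=0$, so Gronwall yields nothing. This is precisely what makes the Lee--Tam theorem nontrivial: it is overcome there not by a smooth cutoff with $|\nabla\Phi|^{2}\le C\Phi$, but by a comparison function with Gaussian-type dependence on $(2-\eta)^{2}/t$ (equivalently, an iteration that improves the power of $t$ step by step while shrinking the ball and the time interval), which suppresses the influence of the boundary region, where $\varphi\sim\a t^{-1}$, down to order $t^{\ell}$ at the center. Your remark that the weight $t^{-\ell}$ beats $L\le\a t^{-1}$ only when $\ell\gtrsim\a$ is harmless --- the case of large $\ell$ implies the case of small $\ell$ once $t\le 1$ --- but it indicates the weight is not where the real obstruction lies.
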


As a direct Corollary, we use the maximum principle to see that Ricci-DeTurck flow is locally stable in $L^\infty$. This is crucial  in constructing local solution to Ricci-harmonic map heat flow.
\begin{cor}\label{cor:poly-growth-h}
There exists $ 10^{-2}>\hat \e_3(n)>0$ such that the following holds: Let $(N,\tilde g(t)),t\in [0,T]$ be a smooth solution to the Ricci flow which is not necessarily complete such that for some $\rho>0$, 
\begin{enumerate}
\item $B_{\tilde g(0)}(x_0,4\rho)\Subset N$ for some $x_0\in N$;
\item $|\Rm(\tilde g(t))|\leq \a t^{-1}$ for some $\a>0$ on $(0,T]$.
\end{enumerate}
If $\hat g(t)$ is a smooth solution to the Ricci-DeTurck flow with respect to $\tilde g(t)$ on $N\times [0,T]$ so that $\hat g(0)=\tilde g(0)$ and 
$$(1-\hat \e_3)\tilde g(t)\leq \hat g(t)\leq (1+\hat \e_3)\tilde g(t).$$
Then for all $\ell\in \mathbb{N}$, there exists $\hat T_3(n,\ell,\a)>0$ such that for all $(x,t)\in B_{\tilde g(0)}(x_0,\rho)\times [0,T\wedge \hat T_3\rho^2]$,
\begin{equation}
\left\{
\begin{array}{ll}
|\tilde g(t)-\hat g(t)|\leq \rho^{-2\ell}t^\ell;\\[2mm]
|\tilde\nabla \hat g(t)|^2\leq \rho^{-2\ell-2}t^\ell
\end{array}
\right.
\end{equation}
\end{cor}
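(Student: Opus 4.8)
The plan is to obtain the two estimates in turn, using the evolution equations of Section~\ref{sec:RDF-intro} together with the interior derivative bounds of Lemma~\ref{lma:RDF-higher-ord} and the localized maximum principle Proposition~\ref{prop:localMP} as the engine that turns ``vanishing at $t=0$'' into polynomial-in-$t$ decay. By parabolic rescaling I would assume $\rho=1$, and I would replace $T$ by $T\wedge 1$ since only small times matter. Because the hypotheses provide the comparatively large ball $B_{\tilde g(0)}(x_0,4)\Subset N$ while the conclusion is only sought on $B_{\tilde g(0)}(x_0,1)$, all the (scale-invariant, purely local) estimates below can be run centered at an arbitrary point of $B_{\tilde g(0)}(x_0,1)$ on a fixed fraction of the available radius; thus it suffices to bound $h:=\hat g-\tilde g$ and $\tilde\nabla h$ at such a point. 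I would set $\hat\e_3:=\min\{\e_1(n),\e_2(n)\}$ with $\e_1,\e_2$ from Lemmas~\ref{lma:improved-evo-h} and~\ref{lma:RDF-higher-ord}; note that $h(\cdot,0)=0$ and $|h|^2\le n\hat\e_3^2$ on $N\times[0,T]$.

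Since $\hat\e_3<\e_2$, Lemma~\ref{lma:RDF-higher-ord} applies and gives, on the relevant ball and up to a time depending only on $n,\a$, the a priori bounds $|\tilde\nabla^k h|=|\tilde\nabla^k\hat g|\le \hat L_k t^{-k/2}$ for $k=1,2,3$, and Shi's estimate $|\tilde\nabla\widetilde\Rm|\le C t^{-3/2}$. Feeding the $k=2$ bound into the second inequality of Lemma~\ref{lma:improved-evo-h} yields, on that ball,
\begin{equation*}
\left(\frac{\partial}{\partial t}-\Delta_{\tilde g(t)}\right)|h|^2\le L\,|h|^2,\qquad L:=C_n(\a+\hat L_2)\,t^{-1}.
\end{equation*}
Since $|h|^2(\cdot,0)=0$, $|h|^2$ is bounded, $L\le \a_1 t^{-1}$ and $\Ric(\tilde g(t))\le\a t^{-1}$, Proposition~\ref{prop:localMP} applied to $\varphi=|h|^2$ gives, for every $m\in\mathbb N$, a constant $C_m$ and a time, both depending only on $n,m,\a$, with $|h|^2\le C_m t^m$ on $B_{\tilde g(0)}(x_0,1)$ for small $t$. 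Taking $m\ge 2\ell$ and absorbing $C_m$ into a further shrinking of the final time proves the $C^0$ estimate.

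For the gradient estimate I would differentiate the Ricci--DeTurck equation \eqref{eqn:RDF-3}. The \emph{key structural observation} is that every term of $\bigl(\partial_t-\hat g^{ij}\tilde\nabla_i\tilde\nabla_j\bigr)\tilde\nabla h$ carries a factor $h$ or $\tilde\nabla h$, and in particular the only term involving $\tilde\nabla\widetilde\Rm$ is $h*\tilde\nabla\widetilde\Rm$, because in \eqref{eqn:RDF-3} the curvature term already carries an $h$. Combining this with $|\tilde\nabla h|^2\le\hat L_1 t^{-1}$ (to absorb $|\tilde\nabla h|^4$), Shi's bound, and the conversion of $\hat g^{ij}\tilde\nabla_i\tilde\nabla_j$ into $\Delta_{\tilde g(t)}$ at the cost of an error $\le C_n|h|\bigl(|\tilde\nabla h||\tilde\nabla^3 h|+|\tilde\nabla^2 h|^2\bigr)\le C_n|h|\,t^{-2}$ (controlled by the $k=3$ bound), one obtains
\begin{equation*}
\left(\frac{\partial}{\partial t}-\Delta_{\tilde g(t)}\right)|\tilde\nabla h|^2\le -|\tilde\nabla^2 h|^2+\frac{C_n}{t}\,|\tilde\nabla h|^2+\frac{C_n}{t^2}\,|h|^2+\frac{C_n}{t^2}\,|h|.
\end{equation*}
Using $|h|^2\le C_m t^m$ from the first part, now with $m$ chosen large (depending on $n,\ell,\a$), the last two terms are $\le C\,t^{q-1}$ for a suitable exponent $q>\ell$; then I would apply Proposition~\ref{prop:localMP} to $\varphi:=|\tilde\nabla h|^2-A t^{q}$, choosing $A=A(n,\ell,\a)$ so large that $\bigl(\partial_t-\Delta_{\tilde g(t)}\bigr)\varphi\le C't^{-1}\varphi$ wherever $\varphi>0$ (here one uses $\varphi(\cdot,0)=0$, $\varphi\le\hat L_1 t^{-1}$, and that $q$ exceeds the dimensional constant $C_n$). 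This yields $|\tilde\nabla h|^2\le (1+A)\,t^{q}\le t^\ell$ for $t$ below a time depending only on $n,\ell,\a$; undoing the rescaling inserts the stated $\rho$-weights.

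The main obstacle is the gradient step: one must verify, by careful bookkeeping of the $\tilde\nabla$-differentiated form of \eqref{eqn:RDF-3}, that the non-decaying curvature forcing $\tilde\nabla\widetilde\Rm$ never multiplies $\tilde\nabla h$ alone but always $h$, so that the $C^0$ decay just obtained in fact makes the entire right-hand side decay; without this the maximum principle only returns the scale-invariant bound $|\tilde\nabla h|^2\lesssim t^{-1}$ of Lemma~\ref{lma:RDF-higher-ord}. A secondary technical point is the third-derivative error produced when one replaces $\hat g^{ij}\tilde\nabla_i\tilde\nabla_j$ by $\Delta_{\tilde g(t)}$ (needed so that Proposition~\ref{prop:localMP} is applicable), which is absorbed using the $k=3$ estimate of the first step. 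The radius and scale bookkeeping needed to pass from the pointwise bounds to the stated ball, and the choice of how large $m$ must be in terms of $\ell$, are routine.
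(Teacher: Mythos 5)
Your proposal is correct and follows essentially the same route as the paper: Lemma~\ref{lma:RDF-higher-ord} supplies the scale-invariant derivative bounds, the second inequality of Lemma~\ref{lma:improved-evo-h} plus Proposition~\ref{prop:localMP} give the polynomial $C^0$ decay of $h$, and your gradient step is exactly the ``Bernstein--Shi trick using the improved estimate of $h$'' that the paper's (much terser) proof offers as one of its two options. The structural point you flag --- that $\tilde\nabla\widetilde\Rm$ only ever enters multiplied by a factor of $h$ --- is indeed the reason the argument closes, and your bookkeeping of the error from replacing $\hat g^{ij}\tilde\nabla_i\tilde\nabla_j$ by $\Delta_{\tilde g(t)}$ is consistent with the paper's evolution equations.
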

\begin{proof}
By rescaling, we might assume $\rho=1$.
By Lemma~\ref{lma:RDF-higher-ord}, we might assume $|\tilde\nabla^2 h|\leq \hat L_2(n,\a)t^{-1}$ on $B_{\tilde g(0)}(x_0,1)\times (0,T\wedge 1]$, provided that $ \hat \e_3$ is small enough. Since $h(0)=0$, the first inequality now follows from applying Proposition~\ref{prop:localMP} and Lemma~\ref{lma:improved-evo-h}.  The first order can be done by using Bernstein-Shi trick as in the proof of Lemma~\ref{lma:RDF-higher-ord}, using the improved estimate of $h$.   Alternatively by differentiating \eqref{eqn:RDF-3}, we have 
\begin{equation}
\begin{split}
\left(\frac{\partial}{\partial t}-\hat g^{ij}\tilde\nabla_i\tilde\nabla_j\right) \tilde\nabla h
&=\tilde\nabla h *h *\widetilde\Rm+h*\tilde\nabla\widetilde\Rm+\tilde\nabla h  *\widetilde\Rm\\
&\quad +   \tilde\nabla h * \tilde\nabla h *\tilde\nabla h+ \tilde\nabla^2 h* \tilde\nabla h
\end{split}
\end{equation}
where we have omitted the contraction of $\hat g$ and $\tilde g$, thanks to the metric equivalence.  Then we argue as in the proof of the second inequality in Lemma~\ref{lma:improved-evo-h} to obtain
\begin{equation}
\begin{split}
\left(\frac{\partial}{\partial t}-\Delta_{\tilde g(t)}\right) |\tilde\nabla h|^2
&\leq C_n L|\tilde \nabla h|^2+ t^{-1/2}|h|^2
\end{split}
\end{equation}
where $L:= t^{1/2}|\tilde\nabla\widetilde\Rm|+ |\widetilde\Rm|+ |\tilde\nabla h|^2+|\tilde\nabla^2 h|$ is a smooth function satisfying $L\leq \b t^{-1}$ for some $\b=\b(n,\a)>0$, thanks to Shi's estimates and Lemma~\ref{lma:RDF-higher-ord}. In particular, the function $
\varphi:=|\tilde\nabla h|^2+ 2t^{-1/2}|h|^2$
satisfies 
\begin{equation}
\left(\frac{\partial}{\partial t}-\Delta_{\tilde g(t)}\right)\varphi\leq C_nL \varphi
\end{equation}
by Lemma~\ref{lma:improved-evo-h}. Furthermore $\varphi(0)=0$, thanks to the improved estimate of $h$. Result follows by applying Proposition~\ref{prop:localMP} to $\varphi$.

\end{proof}

\section{Existence of Ricci-harmonic map heat flow}\label{sec:RHMF}

In this section, we will construct local solution to the Ricci-harmonic map heat flow with quantitative estimates, depending only on the scaling invariant assumptions. This is based on idea of Hochard \cite{HochardPhd} and Simon-Topping \cite{SimonTopping2021} in producing Ricci flow with scaling invariant estimate.

\begin{thm}\label{thm:local-existence}
Suppose $M$ is a smooth manifold and $g(t),\tilde g(t),t\in [0,T]$ are two smooth solution to complete Ricci flow such that $g(0)=\tilde g(0)=g_0$ and for some $\a>0$, 
\begin{enumerate}
\item[(a)] $B_{g_0}(x_0,4)\Subset M$;
\item[(b)] $|\Rm(g(t))|+|\Rm(\tilde g(t))|\leq \a t^{-1}$.
%\item [(c)]$|\nabla\Rm(g(t))|+|\nabla\Rm(\tilde g(t))|\leq \a^{3/2} t^{-{3/2}}$;
\end{enumerate}
Then there exists $\hat T(n,\a)>0$ and a smooth solution 
$$F:B_{g_0}(x_0,1)\times [0,T\wedge \hat T]\to M$$
to \eqref{eqn:RHMHF} such that:
\begin{enumerate}
\item $F(x,0)=x$ for $x\in B_{g_0}(x_0,1)$;
\item  $F$ is a diffeomorphism onto its image.
\end{enumerate}
Furthermore, there is $C_0(n,\a)>0$ such that  for all $t\in [0,T\wedge\hat  T]$,  
$$(1-C_0t)g(t)\leq F_t^*\tilde g(t)\leq (1+C_0t) g(t)$$
and $d_{g_0}\left(x,F_t(x)\right)\leq C_0\sqrt{t}$.
\end{thm}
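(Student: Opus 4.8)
\textbf{Proof plan for Theorem~\ref{thm:local-existence}.}

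The strategy is to build $F$ as a limit of local solutions produced on an exhaustion of $M$ by \emph{incomplete} pieces, using Proposition~\ref{prop:extension-HM} to get existence on a uniform time interval, and then using the a-priori estimates of Section~\ref{sec:pre-RF} to extract a smooth limit with the stated quantitative control. Concretely, fix a compact exhaustion $\{U_j\}$ of $M$ with $B_{g_0}(x_0,4)\Subset U_1$ and $U_j\Subset U_{j+1}$. On each $U_j$ we would like to apply Proposition~\ref{prop:extension-HM} with $f=\mathrm{id}$, $\Lambda_0=n$ (since $|d\,\mathrm{id}|^2_{g_0,g_0}=n$), domain flow $g(t)$ restricted to $U_j$ and target flow $\tilde g(t)$ on $M$. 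The curvature hypotheses there are scaling-invariant $\a/t$ on the domain but the Proposition as stated wants $\rho^{-2}$-type bounds; to reconcile this we first pass to a fixed positive time $t=\tau$ and run the flow on $[\tau,T]$, where on $[\tau,T]$ we have $|\Rm|\le \a/\tau$ and (by Shi) all higher derivatives bounded, so Proposition~\ref{prop:extension-HM} (applied with the clock shifted) gives a solution $F^{(j)}$ of \eqref{eqn:RHMHF} on $B_{g_0}(x_0,\rho(\tau))\times[\tau,\tau+\tilde T_2\rho(\tau)^2]$ with $|dF^{(j)}|^2\le 10n$; here $\rho(\tau)\sim\sqrt{\tau}$ comes from rescaling the $\a/t$ bound at time $\tau$ to a unit bound. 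The point of Hochard's idea (cf.\ the comment before the statement) is that running the construction from smaller and smaller initial times $\tau_j\to0$, with the radius and existence time both scaling like the natural parabolic scale, produces solutions on $B_{g_0}(x_0,1)\times[\tau_j,\hat T]$ for a \emph{fixed} $\hat T=\hat T(n,\a)$, once we check that $1$-balls are controlled under the distance distortion of Lemma~\ref{lma:l-balls}.

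The next step is the passage to the limit $\tau_j\to0$. On $B_{g_0}(x_0,1)\times[\tau_j,\hat T]$ each $F^{(j)}$ has $|dF^{(j)}|^2\le 10n$, so by Proposition~\ref{prop:HF-boot} (with $\Lambda_0=10n$) we get, on the slightly smaller ball and for $t$ in a uniform interval, $|D^\ell dF^{(j)}|^2\le L_k t^{-\ell}$ for all $\ell\le k$ and all $k$. These bounds are uniform in $j$ on $B_{g_0}(x_0,1)\times[\delta,\hat T]$ for each fixed $\delta>0$, so Arzel\`a--Ascoli and a diagonal argument yield a subsequence converging in $C^\infty_{loc}$ on $B_{g_0}(x_0,1)\times(0,\hat T]$ to a smooth solution $F$ of \eqref{eqn:RHMHF}. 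To get the behaviour as $t\to0$ and the initial condition $F(x,0)=x$, I would estimate $d_{g_0}(x,F^{(j)}_t(x))$ directly: since $\partial_t F=-V$ with $|V|=|\Delta_{g,\tilde g}F|=|DdF|_{g}$ traced, and $|DdF|^2\le L_1 t^{-1}$, while also $\tfrac12 g_0\le g(t)\le 2g_0$ on the relevant time interval by the $\a/t$ bound, integrating $|\partial_t F|_{g_0}\lesssim |DdF|_{g(t)}\lesssim t^{-1/2}$ from $\tau_j$ to $t$ gives $d_{g_0}(x,F^{(j)}_t(x))\le C_0\sqrt t$ uniformly; letting $j\to\infty$ gives the same for $F$, hence $F$ extends continuously to $t=0$ with $F_0=\mathrm{id}$, and one upgrades to a smooth solution on the closed interval $[0,\hat T]$ by the standard parabolic boundary regularity already invoked in Theorem~\ref{thm:chenzhu-harmonic-map}.

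For the metric comparison, set $\hat g(t)=(F_t^{-1})^*g(t)$ once we know $F_t$ is a diffeomorphism; it solves the Ricci--DeTurck flow \eqref{eqn:RDF-1} relative to $\tilde g(t)$, equivalently \eqref{eqn:RDF-2}. We want $(1-C_0t)g(t)\le F_t^*\tilde g(t)\le(1+C_0t)g(t)$, i.e.\ $|\hat g(t)-\tilde g(t)|$ is $O(t)$. This is exactly the content of Corollary~\ref{cor:poly-growth-h} with $\ell=1$, provided we first know the smallness $(1-\hat\e_3)\tilde g\le\hat g\le(1+\hat\e_3)\tilde g$ on a uniform time interval. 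That smallness is obtained by a continuity/bootstrap argument: at $t=0$ we have $\hat g=\tilde g=g_0$; Lemma~\ref{lma:improved-evo-h} gives $(\partial_t-\Delta_{\tilde g})|h|^2\le C_n(|\Rm(\tilde g)|+|\tilde\nabla^2 h|)|h|^2$, and combining with the local maximum principle Proposition~\ref{prop:localMP} (whose hypothesis $|h|^2\le \a t^{-1}$ is a consequence of $|dF|^2\le 10n$ translated to a bound on $\hat g$) forces $|h|^2\to0$ as $t\to0$, so $h$ stays in the $\hat\e_3$-ball up to a uniform time $\hat T$. The bijectivity of $F_t$ for small $t$ follows from $d_{g_0}(x,F_t(x))\le C_0\sqrt t$ together with the uniform non-degeneracy $|dF|^2\le 10n$ and the lower bound on $dF$ coming from $\hat g\ge(1-\hat\e_3)\tilde g$ (equivalently $F_t^*\tilde g\ge(1-\hat\e_3)g(t)$), via a standard degree/inverse-function argument on $B_{g_0}(x_0,1)$; alternatively one runs the whole exhaustion argument on slightly larger balls so that $F_t$ maps $B_{g_0}(x_0,1)$ into its image with a quantitative collar. \textbf{Main obstacle.} The delicate point is making the Hochard-type exhaustion quantitative and uniform in $\tau_j\to0$: one must check that the radius loss from Lemma~\ref{lma:l-balls} and the existence time $\tilde T_2\rho^2$ from Proposition~\ref{prop:extension-HM} can be chained across dyadic scales $\tau_j$ without the effective radius collapsing before the fixed time $\hat T(n,\a)$, and simultaneously that the energy bound $|dF|^2\le 10n$ is preserved under this chaining rather than degrading geometrically — this is where the scaling-invariant structure of the $\a/t$ hypothesis is used essentially.
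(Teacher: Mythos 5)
Your architecture matches the paper's: a Hochard-type chaining of Proposition~\ref{prop:extension-HM} across parabolic scales, with Proposition~\ref{prop:HF-boot} supplying interior bounds and Corollary~\ref{cor:poly-growth-h} supplying the final $O(t)$ metric comparison. But there are two genuine gaps, and the first is exactly the point you flag as your ``main obstacle'' without resolving it. Each application of Proposition~\ref{prop:extension-HM} degrades the energy bound from $\Lambda_0$ to $10\Lambda_0$, and the metric pinching degrades similarly at each step, so naive chaining over $\sim\log(1/\tau_j)$ scales loses all control. The paper closes its induction $\mathcal{P}(k)$ by an \emph{improvement} step: from the degraded pinching $(1\pm3\e_3)$ on $[0,t_{k+1}]$ one passes to the Ricci--DeTurck flow $\hat g=(F_t^{-1})^*g(t)$ and applies Corollary~\ref{cor:poly-growth-h} (the local maximum principle, exploiting $\hat g(0)=\tilde g(0)$) to get $|\hat g-\tilde g|\le (Ct/t_k)^{\ell}$ with $\ell=[\a]+2$, which at times $t\le t_k$ is far below $\e_3$; this resets the pinching to $(1\pm\e_3/3)$ before the next extension, so nothing accumulates. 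You invoke this mechanism only for the final metric comparison, not as the engine that makes the chaining reach a uniform $\hat T(n,\a)$; without it the induction does not close.

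The second gap is your derivation of $d_{g_0}(x,F_t(x))\le C_0\sqrt t$. You assert $\tfrac12 g_0\le g(t)\le 2g_0$ ``by the $\a/t$ bound,'' but this is false: $|\Rm|\le\a t^{-1}$ only gives $(s/t)^{C\a}g(s)\le g(t)\le(t/s)^{C\a}g(s)$, so $g(t)$ and $g_0$ are not uniformly equivalent as $t\to0$ (the curvature bound is not integrable in time --- this is the whole difficulty of the theorem). Consequently, integrating the rough bound $|\partial_sF|_{g(s)}\lesssim s^{-1/2}$ against the metric at time $t$ produces an integral of the form $\int_0^t (t/s)^{\a}s^{-1/2}\,ds$, which diverges once $\a\ge\frac12$. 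The paper handles this term (the quantity $\mathbf{B}$ in its proof) precisely by using the improved decay $|\partial_sF_s|\lesssim t_k^{-(\ell+1)/2}s^{\ell/2}$ with $\ell>\a$, again coming from Corollary~\ref{cor:poly-growth-h}, together with Lemma~\ref{lma:l-balls} to convert $d_{\tilde g(t)}$ back to $d_{g_0}$. Both gaps are therefore cured by the same missing ingredient: the polynomial-in-$t$ improvement of the Ricci--DeTurck comparison at every stage of the induction, not merely at the end.
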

\begin{proof}

By Shi's estimate \cite{Shi1989}, we will also assume 
\begin{equation}
|\nabla \Rm(g(t))|+|\nabla\Rm(\tilde g(t))|\leq \a^{3/2}t^{-3/2}
\end{equation}
on $M\times (0,T]$. We will assume $T<1$ and $\a>1$ for convenience.

We start with specifying the constants that will be used in the construction. 
\begin{enumerate}
\item[(i)]  $\e_3(n)=\frac1{10}\hat \e_3(n)$ where $\hat \e_3$ is from Corollary~\ref{cor:poly-growth-h};
\item[(ii)] $\b(n)$ from Lemma~\ref{lma:l-balls};
\item[(iii)]  $\tilde T_2(n,10n)$ from Proposition~\ref{prop:extension-HM};
\item[(iv)]  $\hat T_3(n,\ell,\a)$ from Corollary~\ref{cor:poly-growth-h} where $\ell= 10[\a]+2$;
\item [(v)] $L_3(n,n,\a,10^2n,3)$ and $S_3(n,n,\a,10^2n,3)$ from Proposition~\ref{prop:HF-boot};
\item[(vi)]  $\hat L_4(n,\a)= 10^3n\a+ 10n\sqrt{L_3} $;
\item[(vii)] $\mu(n,\a)=10^{-2}\cdot \inf\{\tilde T_2 \a^{-1},1,(\hat L_4)^{-1}\e_3\}$;
\item[(viii)] $\Lambda(n,\a):=10^3\cdot \sup\{\mu \sqrt{\frac{ L_3}{1-3\e_3} (1+\mu)^\a}, 1+\b \a^{1/2},\sqrt{S_3^{-1}},\sqrt{\hat T_3^{-1}},\sqrt{ \e_3^{-1}},\sqrt{L_3}+1 +  \b\sqrt{\a}\}$.
\end{enumerate}

\medskip

Since $B_{g_0}(x_0,4)\Subset M$, we  consider Dirichlet problem 
\begin{equation}
\left\{
\begin{array}{ll}
\partial_t F=\Delta_{g(t),\tilde g(t)} F\;\;\quad &\text{for}\; x\in B_{g_0}(x_0,4), \; t>0;\\[2mm]
F(x,0)=x \;\;\quad &\text{for}\; x\in B_{g_0}(x_0,4);\\[2mm]
F(x,t)=x \;\;\quad &\text{for}\; x\in \partial B_{g_0}(x_0,4).
\end{array}
\right.
\end{equation}
This has a solution for a short-time on $[0,t_0]$. By smoothness, we can assume $t_0$ is small enough so that when we restrict on $B_{g_0}(x_0,3)$, we have $F(x,0)=x$ and for all $t\in [0,t_0]$,
\begin{enumerate}
\item[(i)]$F_t|_{B_{g_0}(x_0,3)}$ is a diffeomorphism onto its image;
\item[(ii)]$(1-\e_3)g(t)\leq F_t^* \tilde g(t)\leq (1+\e_3)g(t)$ on $B_{g_0}(x_0,3)$.
\end{enumerate}  
We remark here that $t_0$ possibly depends on the geometry of $g(t)$, $\tilde g(t)$ on the compact set $B_{g_0}(x_0,4)$. We want to remove the dependence using idea of Hochard \cite{HochardPhd}.

\medskip
We define a sequence of real number inductively:
\begin{itemize}
\item $r_0=3$;
\item $r_{i+1}=r_i-6\Lambda\sqrt{t_i}$;
\item $t_{i+1}=(1+\mu) t_i$.
\end{itemize}
\medskip

We let $\mathcal{P}(k)$ be the following statement: There exists a smooth solution $F$ to \eqref{eqn:RHMHF} on $B_{g_0}(x_0,r_k)$ for $t\in [0,t_k]$ such that 
\begin{enumerate}
\item[(I)] $F_t|_{B_{g_0}(x_0,r_k)}$ is a  diffeomorphism onto its image;
\item[(II)] $(1-\e_3)g(t)\leq F_t^* \tilde g(t)\leq (1+\e_3) g(t)$ on $B_{g_0}(x_0,r_k)\times [0,t_k]$;
\item[(III)] For all $(x,t)\in B_{g_0}(x_0,r_k-5\Lambda\sqrt{t_k})\times [0,t_k]$, we have 
\begin{equation*}
\left\{
\begin{array}{ll}
d_{g_0}\left( x, F_t(x)\right)\leq \frac14 \Lambda\sqrt{t_k};\\[2mm]
B_{g_0}(x,\Lambda\sqrt{t_k})\subseteq F_t\left( B_{g_0}(x,2\Lambda\sqrt{t_k})\right).
\end{array}
\right.
\end{equation*}

\end{enumerate}
\medskip

From above discussion and smoothness, $\mathcal{P}(0)$ is true by shrinking $t_0$ if necessary. We claim that $\mathcal{P}(k)$ is true if $r_k>0$ and $t_k<T$. We will prove it by induction on $k$.

We assume $r_{k+1}>0$, $t_{k+1}<T$ and  $\mathcal{P}(k)$ holds. That said, we have a smooth solution $F$ to \eqref{eqn:RHMHF} on $B_{g_0}(x_0,r_k)\times [0,t_k]$ so that (I),  (II) and (III) above are true.  We first extend $F$ to a longer time interval on a smaller set.
\begin{claim}\label{claim:existence-F-extended}
There exists a smooth solution $F$ to \eqref{eqn:RHMHF} on $B_{g_0}(x_0,r_k-\Lambda\sqrt{t_k})\times [0,t_{k+1}]$ such that $|dF|^2\leq 10^2n$.
\end{claim}
\begin{proof}[Proof of Claim]
We consider the translated solution $g(t_k+t)$ and $\tilde g(t_k+t)$ for $t\in [0,\mu t_k]$. We apply Proposition~\ref{prop:extension-HM} on the translated Ricci flows with $U=B_{g_0}(x_0,r_k)$, $\rho=\sqrt{\frac{t_k}{\a}}$, $\Lambda_0=10n$ and $f=F_{t_k}$ to obtain a smooth solution $\hat F$ on $U\times [0,\mu t_k\wedge \tilde T_2 \rho^2]=U\times [0,\mu t_k]$ (thanks to choice of $\mu$) such that $\hat F(0)=F_{t_k}$ and $|d\hat F|^2\leq 10^2n$. Moreover, $\hat F$ solves Ricci-harmonic map heat flow with respect to $g(t)$ and $\tilde g(t)$ on $U_\rho=\{x\in U: B_{g(t_k)}(x,\rho)\Subset U\}$ for $t\in [0,\mu t_k]$. Thanks to  Lemma~\ref{lma:l-balls}, for $x\in B_{g_0}(x_0,r_k-\Lambda\sqrt{t_k})$ we have
\begin{equation}
\begin{split}
B_{g(t_k)}\left(x,\sqrt{\frac{t_k}{\a}}\right)&\subseteq B_{g_0}\left(x,(\a^{-1/2}+\b \a^{1/2})t_k\right)
\Subset U,
\end{split}
\end{equation}
thanks to choice of $\Lambda$. Therefore, if we extend $F$ on $B_{g_0}(x_0,r_k-\Lambda\sqrt{t_k})$ by defining 
\begin{equation}
F(x,t)=\left\{
\begin{array}{ll}
F(x,t),\;\;\text{if}\;\; t\in [0,t_k];\\[2mm]
\hat F(x,t-t_k),\;\;\text{if}\;\; t\in [t_k,t_{k+1}],
\end{array}
\right.
\end{equation}
then $F$ is a solution to \eqref{eqn:RHMHF} on $B_{g_0}(x_0,r_k-\Lambda\sqrt{t_k})\times [0,t_{k+1}]$. 
\end{proof}

We now show that $F$ remains  a local diffeomorphism when restricted on a smaller set, by showing a weaker form of (II) in $\mathcal{P}(k+1)$.
\begin{claim}\label{claim:local-diff}
The smooth solution $F$ obtained from Claim~\ref{claim:existence-F-extended} satisfies
$$(1-3\e_3)g(t)\leq F_t^* \tilde g(t)\leq (1+3\e_3)g(t)$$
on $B_{g_0}(x_0,r_k-2\Lambda\sqrt{t_k})\times [0,t_{k+1}]$. In particular, $F_t|_{B_{g_0}(x_0,r_k-2\Lambda\sqrt{t_k})}$ is a local diffeomorphism for $t\in [0,t_{k+1}]$.
\end{claim}
\begin{proof}[Proof of claim]

Let  $x_2\in B_{g_0}(x_0,r_k-2\Lambda\sqrt{t_k})$. By applying Proposition~\ref{prop:HF-boot} to $F$ on $B_{g_0}(x_2,\Lambda\sqrt{t_k})\times [0,t_{k+1}]$ with $\rho=\frac14\Lambda\sqrt{t_k}$, we conclude that at $x_2$, for all $ 0\leq m \leq 3$ and $t\in [0,t_{k+1}\wedge S_3\rho^2]=[0,t_{k+1}]$ (thanks to our choice of $\Lambda$ and $\mu$), we have 
\begin{equation}\label{eqn:esti-F}
 |D^m dF|^2 \leq L_3 t^{-m}.
\end{equation}

We now establish the metric equivalence using \eqref{eqn:esti-F}. The argument is point-wise. Fix $x\in B_{g_0}(x_0,r_k-2\Lambda\sqrt{t_k})$. We only show the lower bound, the upper bound is identical.  By induction hypothesis, $F_t^*\tilde g(t)\geq (1-\e_3)g(t)$ for $t\in [0,t_k]$, at $x$. Let $\phi(t)$ be a smooth non-negative function such that $\phi(t)\leq 1$ and $h(t):=F_t^*\tilde g(t)-\phi(t)g(t)> 0$ at $t=t_k$. We want to show that $h(t)>0$ for all $t\in [t_k,t_{k+1})$. If this is not the case, then there is $s_0\in (t_k,t_{k+1})$ such that $\mathrm{Null}\left(h(s_0)\right)\neq \emptyset$ and $h(t)>0$ for all $t\in [t_k,s_0)$. Let $v$ be the unit vector with respect to $g(s_0)$ such that  $h(s_0)(v,v)=0$. Then at $t=s_0$,  \eqref{eqn:RHMHF},
\begin{equation}
\begin{split}
 0&\geq \partial_t \left(h(t)(v,v) \right)|_{ t=s_0}\\
 &=2v^iv^j\left(\tilde g_{\a\b}F^\a_i F^\b_{tj}-F^\a_i F^\b_j\tilde R_{\a\b}  \right)-\phi'+2\phi R_{vv}\\
% &\geq 2v^j F^\a_v \tilde g_{\a\b} \left( F^\b_{tj}-\a t^{-1}F^\b_j  \right)-\phi'+2\phi R_{vv}\\
 &\geq -\phi' - t^{-1}\left(10^3n\a+ 10n\sqrt{L_3} \right)=-\phi'-\hat L_4 t_k^{-1},
\end{split}
\end{equation}
using our choice of $\hat L_4$. If we choose $\phi(t)=1-2\e_3-\hat L_4 t_k^{-1}(t-t_k)$, then we see that $s_0$ does not exist and thus,
\begin{equation}
F_t^*\tilde g(t)\geq (1-3\e_3)  g(t)
\end{equation}
on $B_{g_0}(x_0,r_k-2\Lambda\sqrt{t_k})\times [0,t_{k+1}]$, by induction hypothesis and our choice of $\Lambda$ and $\mu$. The upper bound of $F_t^*\tilde g(t)$ is similar. This proves the claim.
\end{proof}

\medskip

Next, we show that (I) in  $\mathcal{P}(k+1)$ holds. 

\begin{claim}\label{claim:diff-local}
Let $F_t$ be the one parameter family of smooth map obtained from Claim~\ref{claim:existence-F-extended}, then $F_t|_{B_{g_0}(x_0,r_k-3\Lambda\sqrt{t_k})}$ is a diffeomorphism onto its image for $t\in [0,t_{k+1}]$. 
\end{claim}
\begin{proof}
By induction hypothesis and Claim~\ref{claim:local-diff}, it suffices to show that if $x_1,x_2\in B_{g_0}(x_0,r_k-3\Lambda\sqrt{t_k})$ and $t_0'\in (t_k,t_{k+1}]$ such that $F_{t_0'}(x_1)=F_{t_0'}(x_2)$, then $x=y$. Suppose $x_1\neq x_2$, we can assume $t_0'$ to be the first $t_0'\in (t_k,t_{k+1}]$ such that $w:=F_{t_0'}(x_1)=F_{t_0'}(x_2)$. 
 
We consider the set
$$\Sigma:=\{(z,t)\in B_{g_0}(x_0,r_k-2\Lambda\sqrt{t_k})\times [t_k,t_0']: F_t(z)=w\}.$$
Thanks to Claim~\ref{claim:local-diff}, the map $\Phi:B_{g_0}(x_0,r_k-2\Lambda\sqrt{t_k})\times [t_k,t_0'] \to M$ given by $\Phi(x,t):=F_t(x)$ is of full rank and hence $\Sigma$ is one dimensional manifold traversal to time direction such that $(x_1,t_0'),(x_2,t_0')\in \Sigma$. We let $x_i(t)$ be time curve such that $x_i(t_0')=x_i$ and $\Phi(x_i(t),t)\equiv w$. We claim that $x_i(t)$ lies in $B_{g_0}(x_0,r_k-2\Lambda\sqrt{t_k})$ for all $t\in [t_k,t_0']$. If this is true, then $F_{t_k}\left(x_1(t_k)\right)=F_{t_k}\left(x_2(t_k)\right)$ so that $x_1(t_k)=x_2(t_k)$ by induction hypothesis. This forces $x_1(t)\equiv x_2(t)$ for $t\in [t_k,t_0']$. 

It remains to control the location of $x_i(t)$. By differentiating $\Phi(x_i(t),t)\equiv w$, we have $
F^*\tilde g(x_i',x_i')=g(F_t,F_t)$ so that Claim~\ref{claim:local-diff} and \eqref{eqn:esti-F} imply that  
\begin{equation}\label{eqn:x'-speed}
(1-3\e_3)|x_i'|_{g(t_k)}^2\leq L_3t^{-1}(1+\mu)^\a.
\end{equation}
on $[t_{min},t_0']$ where $t_{min}\in [t_k,t_0']$ is minimal of time such that $x_i(t)\in B_{g_0}(x_0,r_k-2\Lambda\sqrt{t_k})$ for $t\in [t_{min},t_0']$. In particular,
\begin{equation}\begin{split}
d_{g_0}(x_i(t_{min}),x_i(t_{0}'))&\leq d_{g(t_k)}(x_i(t_{min}),x_i(t_{0}'))+\b\sqrt{\a t_k}\\
&\leq \int^{t_0'}_{t_{min}} |x_i'|_{g(t_k)} ds +\b\sqrt{\a t_k}\\
&\leq \left(\mu \sqrt{\frac{ L_3}{1-3\e_3} (1+\mu)^\a}+\b\sqrt{\a} \right) \sqrt{t_k}\\
&<\Lambda \sqrt{t_k}
\end{split}
\end{equation}
by Lemma~\ref{lma:l-balls}, \eqref{eqn:x'-speed} and our choice of $\Lambda$. Therefore, $
x_i(t_{min})\in B_{g_0}(x_0,r_k-2\Lambda\sqrt{t_k})$ and thus $t_{min}=t_k$.

\end{proof}

Finally, we intend to improve the estimates on the smaller domain.
\begin{claim}\label{claim:dist-II-III}
The smooth map $F$ obtained by Claim~\ref{claim:existence-F-extended} satisfies (II) and (III) in $\mathcal{P}(k+1)$. 
\end{claim}
\begin{proof}
We first improve the asymptotic of $\partial_t F$ at $t=0$. Fix $x_1\in B_{g_0}(x_0,r_k-5\Lambda\sqrt{t_k})$. Using induction hypothesis (III) in $\mathcal{P}(k)$, we can consider the flow $\hat g(t):=(F_t^{-1})^*g(t)$ on $B_{g_0}(x_1,\Lambda\sqrt{t_k})\times [0,t_k]$. By (II) in $\mathcal{P}(k)$, we can apply Corollary~\ref{cor:poly-growth-h} with $\rho=\frac14 \Lambda \sqrt{t_k}$ to conclude that for all $t\in [0,t_k\wedge (16^{-1}\hat T_3  \Lambda^2t_k)]=[0,t_k]$ (by our choice of $\Lambda$) and $x\in B_{g_0}(x_1,\frac14 \Lambda\sqrt{t_k})$,
\begin{equation}\label{eqn:esti-g-RDF}
\left\{
\begin{array}{ll}
|\tilde g(t)-\hat g(t)|\leq \left(16\Lambda^{-2} t_k^{-1} t\right)^{\ell} ;\\[2mm]
|\tilde\nabla \hat g(t)|^2\leq \left(16\Lambda^{-2} t_k^{-1} \right)^{\ell+1}t^\ell.
\end{array}
\right.
\end{equation}

Since $F_t(x_1)\in B_{g_0}(x_1,\frac14 \Lambda\sqrt{t_k})$, using our choice of $\Lambda$ and $\ell\geq 1$, \eqref{eqn:esti-g-RDF} implies 
\begin{equation}\label{eqn:metric-improved-RDF-0}
\left(1-(16\Lambda^{-2}t_k^{-1}t)^\ell\right) g(t)\leq F_t^*\tilde g(t)\leq \left(1+(16\Lambda^{-2}t_k^{-1}t)^\ell\right) g(t)
\end{equation}
and thus
 \begin{equation}\label{eqn:metric-improved-RDF}
\left(1-\frac13 \e_3\right) g(t)\leq F_t^*\tilde g(t)\leq \left(1+\frac13 \e_3\right) g(t)
\end{equation}
on $B_{g_0}(x_0,r_k-5\Lambda\sqrt{t_k})\times [0,t_k]$. Now (II) of $\mathcal{P}(k+1)$ follows from the argument in the Claim~\ref{claim:local-diff} with $\e_3$ replaced by $\frac13 \e_3$.  

\medskip
It remains to establish (III) of $\mathcal{P}(k+1)$. We need to use the first order estimate in \eqref{eqn:esti-g-RDF} to improve the asymptotic of $F_t$ as $t\to0$.  Fix $x_2\in  B_{g_0}(x_0,r_{k}-5\Lambda\sqrt{t_{k}})$,
where we know from \eqref{eqn:esti-g-RDF} that $|\partial_t F_t(x_2)|\leq \left(16\Lambda^{-2} t_k^{-1} \right)^{\ell+1}t^\ell$ for $t\in [0,t_k]$. Consider the curve $\gamma(s)=F_s(x_2), s\in [0,t]$. By Lemma~\ref{lma:l-balls}, for $t\in [t_k,t_{k+1}]$
\begin{equation}
\begin{split}
d_{g_0}\left(x_2, F_t(x_2)\right)&\leq d_{\tilde g(t)}\left(x_2, F_t(x_2)\right)+\b\sqrt{\a t}\\
&\leq  \int^t_0 |\gamma'(s)|_{\tilde g(t)}\, ds  +\b\sqrt{\a t}\\
&\leq \left( \int^t_{t_k}+\int^{t_k}_0\right)|\partial_sF_s(x_2)|_{\tilde g(t)}\, ds  +\b\sqrt{\a t}\\
&=\mathbf{A}+\mathbf{B}+\b\sqrt{\a t}.
\end{split}
\end{equation}

Using the rough estimate from \eqref{eqn:esti-F} with $ m=1$, we have
\begin{equation}
\begin{split}
\mathbf{A}&\leq (t_{k+1}-t_k) \sqrt{L_3 t_k^{-1}}\leq  \sqrt{L_3t_{k}} 
\end{split}
\end{equation}
while we use the Ricci flow equation and the improved estimate of $\partial_t F_t(x_2)$ to deduce
\begin{equation}
\begin{split}
\mathbf{B}&\leq \int^{t_k}_0  t^\a s^{-\a}|\partial_sF_s(x_2)|_{\tilde g(s)}\, ds \\
&\leq t^\a  (16\Lambda^{-2} t_k^{-1})^{(\ell+1)/2}\int^{t_k}_0 s^{\frac12 \ell-\a} ds\\
&\leq  (16\Lambda^{-2} )^{(\ell+1)/2}  \sqrt{t_{k}}\leq \sqrt{t_{k+1}}
\end{split}
\end{equation}
where we have used $16\Lambda^{-2}\leq 1$  and choice of $\ell$.

Combining all, we conclude that for all $x_2\in B_{g_0}(x_0,r_{k}-5\Lambda\sqrt{t_{k}})$ and $t\in [t_k,t_{k+1}]$,
\begin{equation}\label{eqn:improved-dist-compare}
 d_{g_0}\left(x_2,F_t(x_2)\right)\leq \left( \sqrt{L_3}+1 +  \b\sqrt{\a}\right)\sqrt{t_{k+1}}\leq \frac14 \Lambda \sqrt{t_{k+1}}.
\end{equation}

Since $B_{g_0}(x_0,r_{k+1}-5\Lambda\sqrt{t_{k+1}})\subseteq B_{g_0}(x_0,r_{k}-5\Lambda\sqrt{t_{k}})$, this proves the first part of (III) in $\mathcal{P}(k+1)$. It remains to prove the second part of (III) in $\mathcal{P}(k+1)$. 
Fix $x_3\in B_{g_0}(x_0,r_{k+1}-5\Lambda\sqrt{t_{k+1}})$. Since $F_0=\mathrm{id}$, we let $s> 0$ be the maximal time in $[0,t_{k+1}]$ such that for all $t\in [0,s]$, 
$$ B_{g_0}(x_3,\Lambda\sqrt{t_{k+1}})\Subset F_t\left(B_{g_0}(x_3,2\Lambda\sqrt{t_{k+1}}) \right).$$

If $s<t_{k+1}$, then there is $z$ such that $d_{g_0}(x_3,z)=2\Lambda\sqrt{t_{k+1}}$ and $d_{g_0}\left(x_3,F_s(z)\right)=\Lambda \sqrt{t_{k+1}}$. In particular, $d_{g_0}(x_0,z)\leq r_{k}-5\Lambda\sqrt{t_k}$ so that \eqref{eqn:improved-dist-compare} applies  at $z$. Thus,
\begin{equation}
\begin{split}
2\Lambda\sqrt{t_{k+1}}=d_{g_0}(x_3,z)&\leq d_{g_0}(x_3,F_s(z))+d_{g_0}(F_s(z),z)\\
&\leq \Lambda\sqrt{t_{k+1}}+\frac14 \Lambda\sqrt{t_{k+1}}.
\end{split}
\end{equation}
This is impossible and hence $s=t_{k+1}$. This proves (III) of $\mathcal{P}(k+1)$.
\end{proof}

By induction, we have shown that $\mathcal{P}(k)$ is true if $t_k<T$ and $r_k>0$. We now claim that there is a smooth solution $F$ to \eqref{eqn:RHMHF} on $B_{g_0}(x_0,1)\times [0,T\wedge \hat T]$ for some uniform $\hat T(n,\a)>0$. Since $r_i\to-\infty$ as $i\to+\infty$, we can find $k_0\in\mathbb{N}$ such that $r_{k_0}>2\geq r_{k_0+1}$. We now estimate the corresponding $t_{k_0}$. 
\begin{equation}
\begin{split}
2\leq r_0-r_{k_0+1}&=-\sum_{i=0}^{k_0} (r_{i+1}-r_i)\\
&=6\Lambda \sum_{i=0}^{k_0} \sqrt{t_i}\\
&\leq 6\Lambda \sqrt{t_{k_0}}\cdot  \sum_{i=0}^{\infty} (1+\mu)^{-i/2}\\
&:=2\hat T^{-1/2}\cdot \sqrt{t_{k_0}}
\end{split}
\end{equation}
where $\hat T=\hat T(n,\a)$. Hence, $t_{k_0}\geq \hat T$. We replace $T$ by $T\wedge \hat T$ and further insist $5\Lambda\sqrt{\hat T}\leq 1$. Hence, there must be another $k_0'<k_0$ so that $t_{k_0'}<T\wedge \hat T\leq t_{1+k_0'}$, $r_{k_0'}>r_{k_0}>2$ and $\mathcal{P}(k_0')$ is true. Thus we conclude the existence of $F$ on $B_{g_0}(x_0,2)\times [0,\hat T\wedge T]$, after replacing $\hat T$ by $(1+\mu)^{-1}\hat T$. Now the distance estimate follows from  Claim~\ref{claim:dist-II-III} with $k=k_0'$ and $r_{k_0'}-5\Lambda\sqrt{t_{k_0'}}\geq 1$. Furthermore \eqref{eqn:metric-improved-RDF-0} holds for $t_{k_0'}$ so that 
\begin{equation}
(1-C_0t) g(t)\leq F_t^*\tilde g(t)\leq (1+C_0t) g(t)
\end{equation}
on $B_{g_0}(x_0,1)\times [0,T\wedge \hat T]$, for some $C_0(n,\a)>0$.  This completes the proof by restricting $F$ to $B_{g_0}(x_0,1)\times [0,T\wedge \hat T]$.
\end{proof}

\section{Uniqueness of Ricci flows} \label{sec:unique}
In this section, we use the local existence of Ricci-harmonic map heat flow to show uniqueness of Ricci flow with scaling invariant estimate.

\begin{proof}[Proof of Theorem~\ref{thm:unique}] 
For any $R>1$, we consider $g_R(t)=R^{-2}g(R^2t)$ and $\tilde g_R(t)=R^{-2}\tilde g(R^2t)$ for $t\in [0,R^{-2}T]$. By Theorem~\ref{thm:local-existence}, there exists a smooth solution $\hat F_R(t)$ on $B_{g_{R,0}}(x_0,1)\times [0,TR^{-2}\wedge \hat T]$. By considering $F_R(x,t)=\hat  F_R(x,R^{-2}t)$, we obtain a sequence of Ricci-harmonic map heat flow $\{F_R\}_{R>1}$ defined on $B_{g_{0}}(x_0,R)\times [0,T]$ such that 
\begin{enumerate}
\item $F_R(x,0)=x$;
\item  $F_R$ is a diffeomorphism onto its image;
\item $(1-C_0R^{-2}t) g(t)\leq (F_R)_t^*\tilde g(t)\leq (1+C_0R^{-2}t) g(t)$;
\item $d_{g_0}\left(x,(F_R)_t(x)\right)\leq C_0\sqrt{t}$ 
\end{enumerate}
on $B_{g_0}(x_0,R)\times [0,T]$. The property (4) shows that $F_R$ maps compact set to compact set, uniform in $R\to+\infty$. Since $|dF_R|$ is uniformly bounded, by local parabolic Schauder estimate, $F_{R}$ sub-converges to a global solution to the Ricci-harmonic map heat flow $F:M\times [0,T]\to M$ as $R\to +\infty$ such that $F(0)=\mathrm{Id}$ and $F_t^*\tilde g(t)\equiv g(t)$ on $M\times[0,T]$. In particular, $F_t$ is a local isometry. By Lemma~\ref{lma:evo-energy}, we have $DdF\equiv 0$ and hence $\Delta F=\partial_t F=0$. This implies $F_t(x)=x$ for all $(x,t)\in M\times [0,T]$ and therefore $g(t)\equiv \tilde g(t)$ for $t\in [0,T]$. This completes the proof.
\end{proof}

\medskip

By Theorem~\ref{thm:unique}, we have uniqueness as long as complete Ricci flow has bounded curvature instantaneously in scaling invariant way. Particularly, this is the case when the initial three-fold is uniformly non-collapsed with $\Ric\geq 0$.
\begin{proof}[Proof of Corollary~\ref{cor:strong-unique}] 
By the work of Simon-Topping \cite{SimonTopping2021}, there is a short-time solution to the  Ricci flow $\tilde g(t),t\in [0,T]$ starting from $g_0$ with $|\Rm(\tilde g(t))|\leq \a t^{-1}$, see also \cite{Lai2019}. It remains to show that any complete solution coincide with $\tilde g(t)$ for small $t$. Let $g(t),t\in [0,\tilde T]$ be a complete solution to the Ricci flow starting from $g_0$. By \cite[Theorem 1.2]{ChenXuZhang2013}, $\Ric(g(t))\geq 0$ for all $t\in [0,\tilde T]$. It follows from \cite[Lemma 2.1]{SimonTopping2023} that for some $\hat T(n,v_0)>0$, we have $|\Rm(g(t))|\leq \a t^{-1}$ on $(0,\tilde T\wedge \hat T]$ for some $\a(n,v_0)>0$. By Theorem~\ref{thm:unique}, $g(t)\equiv \tilde g(t)$ for $t\in [0,T\wedge \tilde T\wedge \hat T]$.
\end{proof}

Using the same method, we have the same result for complete $U(n)$ invariant \KR flow.

\begin{cor}\label{cor:strong-unique-KRF}
Suppose $(\mathbb{C}^n,g_0)$ is a complete \K metric with $U(n)$ symmetry such that 
\begin{enumerate}
\item $\mathrm{BK}(g_0)\geq 0$;
\item $\inf_M\mathrm{Vol}_{g_0}\left(B_{g_0}(x,1) \right)>0$,
\end{enumerate}
then any complete solution to $U(n)$ invariant \KR flow coincides with the solution constructed by Chau-Li-Tam \cite{ChauLiTam2017}, for a short-time.
\end{cor}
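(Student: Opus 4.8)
The plan is to mirror the proof of Corollary~\ref{cor:strong-unique}, replacing the Simon--Topping existence input with the Chau--Li--Tam construction and the Chen--Xu--Zhang / Simon--Topping curvature-control inputs with their $U(n)$-invariant K\"ahler analogues. Concretely, I would first invoke \cite{ChauLiTam2017} to obtain a complete $U(n)$-invariant \KR flow $\tilde g(t)$, $t\in[0,T]$, starting from $g_0$, which by construction satisfies a scaling-invariant curvature bound $|\Rm(\tilde g(t))|\leq \a t^{-1}$ on $(0,T]$ for some $\a>0$. Then, given any complete $U(n)$-invariant solution $g(t)$, $t\in[0,\tilde T]$, to the \KR flow with $g(0)=g_0$, the task reduces entirely to verifying that $g(t)$ itself also enjoys an estimate of the form $|\Rm(g(t))|\leq \a' t^{-1}$ on $(0,\tilde T\wedge \hat T]$ for some $\a'>0$ and some $\hat T>0$; once this is in hand, Theorem~\ref{thm:unique} applies verbatim and yields $g(t)\equiv\tilde g(t)$ on $[0,T\wedge\tilde T\wedge\hat T]$.

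For the curvature estimate on $g(t)$, I would use the preservation of non-negativity of the bisectional curvature under the \KR flow: by the maximum-principle arguments of Bando, Mok, and Shi (and their noncompact refinements, e.g.\ by Chau--Tam and Ni--Tam), $\mathrm{BK}(g_0)\geq 0$ implies $\mathrm{BK}(g(t))\geq 0$, hence in particular $\Ric(g(t))\geq 0$, for all $t$ in the maximal existence interval. Non-negative Ricci curvature together with the uniform non-collapsing hypothesis (2) --- which is preserved in a suitable quantitative form for short time, since the metrics are mutually equivalent on compact time intervals by the $c/t$ distance-distortion Lemma~\ref{lma:l-balls} --- then feeds into \cite[Lemma~2.1]{SimonTopping2023}, exactly as in the proof of Corollary~\ref{cor:strong-unique}, to produce $\hat T(n,v_0)>0$ and $\a(n,v_0)>0$ with $|\Rm(g(t))|\leq \a t^{-1}$ on $(0,\tilde T\wedge\hat T]$. (In the K\"ahler setting one could alternatively bypass Simon--Topping and use that $\mathrm{BK}\geq0$ plus Euclidean-type volume growth already gives a scaling-invariant curvature bound via Chau--Tam-type estimates, but routing through the already-cited \cite{SimonTopping2023} keeps the argument uniform with the real case.)

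The main obstacle I anticipate is the non-collapsing propagation: one needs $\inf_M\mathrm{Vol}_{g(t)}(B_{g(t)}(x,1))\geq v_0'>0$ for a short time, not merely at $t=0$. With $\Ric(g(t))\geq 0$ and the metric equivalence $c^{-1}g_0\leq g(t)\leq c\,g_0$ on $[0,\hat T]$ coming from the $a/t$-type Ricci bound (which in turn comes from the very curvature estimate we are trying to prove, so some care is needed to avoid circularity --- one runs a short-time bootstrap, or works on the time interval where the a priori bound holds by openness/continuity), Bishop--Gromov volume comparison transfers the $g_0$-non-collapsing to $g(t)$-non-collapsing up to dimensional constants. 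This is precisely the mechanism used implicitly in Corollary~\ref{cor:strong-unique}, so I would structure the K\"ahler proof to cite the same steps. Finally, once $|\Rm(g(t))|\leq \a' t^{-1}$ and $|\Rm(\tilde g(t))|\leq \a t^{-1}$ are both established on a common interval $(0,\tau]$ with $g(0)=\tilde g(0)=g_0$, Theorem~\ref{thm:unique} closes the argument and identifies the two flows; the $U(n)$-invariance plays no role beyond guaranteeing that the Chau--Li--Tam solution is the relevant comparison flow.
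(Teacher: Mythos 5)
Your overall architecture matches the paper's (existence from Chau--Li--Tam, preservation of $\mathrm{BK}\geq 0$, a scaling-invariant curvature bound, then Theorem~\ref{thm:unique}), but two of your key inputs are wrong in ways that matter. First, the preservation of $\mathrm{BK}\geq 0$ cannot be obtained from the Bando--Mok--Shi or Chau--Tam/Ni--Tam maximum-principle arguments you cite: those all require bounded curvature (or comparable a priori control) along the flow, and here $g(t)$ is an \emph{arbitrary} complete solution with no curvature assumption whatsoever --- establishing such control is precisely what is at stake. The paper instead invokes Yang--Zheng \cite[Theorem 3.1]{YangZhang2013}, which proves preservation of non-negative bisectional curvature specifically for complete $U(n)$-invariant \KR flow without any curvature bound. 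So your closing remark that ``the $U(n)$-invariance plays no role beyond guaranteeing that the Chau--Li--Tam solution is the relevant comparison flow'' is exactly backwards: the symmetry is the substitute for the Chen--Xu--Zhang three-dimensional pinching result, i.e.\ it is what makes the positivity-preservation step work in unbounded curvature.

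Second, your primary route to the $\a t^{-1}$ bound, via \cite[Lemma~2.1]{SimonTopping2023}, does not apply: that lemma is a genuinely three-dimensional statement (it rests on Ricci curvature controlling the full curvature tensor in dimension $3$), whereas here the real dimension is $2n$. The correct input is the K\"ahler-specific point-picking estimate \cite[Lemma 3.1]{LeeTamPAMS}, which converts $\mathrm{BK}(g(t))\geq 0$ plus the initial non-collapsing into $|\Rm(g(t))|\leq \a t^{-1}$; your parenthetical ``Chau--Tam-type'' alternative is essentially this and is the route the paper takes. Once that substitution is made, your worry about propagating non-collapsing forward in time (and the attendant circularity) evaporates, since the lemma takes non-collapsing only at $t=0$ as a hypothesis. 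With these two corrections the argument closes exactly as you describe via Theorem~\ref{thm:unique}.
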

\begin{proof}
The proof is identical to Corollary~\ref{cor:strong-unique}. The $U(n)$ invariant \KR flow has been constructed by \cite[Corollary 3.1]{ChauLiTam2017}. It remains to show that under non-collapsing assumption, all complete solution with $U(n)$ symmetry coincides with their solution.

By \cite[Theorem 3.1]{YangZhang2013}, the non-negativity of bisectional curvature is preserved under complete \KR flow with $U(n)$ symmetry. It then follows from \cite[Lemma 3.1]{LeeTamPAMS}  that the \KR flow has scaling invariant curvature decay. The uniqueness then follows from Theorem~\ref{thm:unique}.

\end{proof}


\begin{thebibliography}{100}
%



\bibitem{CabezasWilking}Cabezas-Rivas, E.; Wilking,  B., {\sl How to produce a Ricci flow via a Cheeger-Gromoll exhaustion},  J. Eur. Math. Soc. (JEMS) 17 (2015), no. 12, 3153--3194.

%

\bibitem{BamlerBrendle}Bamler, R. H.; Brendle, S. {\sl A comparison principle for solutions to the Ricci flow}. Math. Res. Lett. 22 (2015), no. 4, 983--988.

\bibitem{BamlerCabezasWilking2019}Bamler, R. H. ; Cabezas-Rivas, E.; Wilking,  B., {\sl The Ricci flow under almost nonnegative curvature conditions}, Invent. Math. 217 (2019) 95--126
%
%
\bibitem{ChanLeeHuang2024} Chan, P.-Y.; Huang, S.; Lee, M.-C., {\sl Manifolds with small curvature concentration},  Ann. PDE 10 (2024), no. 2, Paper No. 23, 31 pp.
%
\bibitem{ChauLiTam2017}Chau, A.; Li, K.-F.; Tam, L.-F., {\sl Longtime existence of the K\"ahler-Ricci flow on $\Bbb{C}^n$}. Trans. Amer. Math. Soc. 369 (2017), no. 8, 5747--5768. 

%
%

\bibitem{ChenZhu2006}Chen, B.-L.; Zhu, X.-P., {\sl  Uniqueness of the Ricci flow on complete noncompact manifolds}. J. Differential Geom. 74 (2006), no. 1, 119--154.


\bibitem{Chen2009} Chen, B.-L., {\sl B.-L. Chen, Strong uniqueness of the Ricci flow}. J. Differential Geom. 82 (2009) 363–382.

\bibitem{ChenXuZhang2013}Chen, B.-L.; Xu, G.; Zhang, Z., {\sl  Local pinching estimates in 3-dim Ricci flow}. Math. Res. Lett. 20 (2013), no. 5, 845--855. 



%
\bibitem{ChowBookI}Chow, B. ; Knopf,  D., {\sl The Ricci flow: an introduction.} Mathematical Surveys and Monographs, 110. American Mathematical Society, Providence, RI, 2004.
%

\bibitem{ChuLee} Chu, J.; Lee, M.-C., {\sl Ricci-Deturck flow from rough metrics and applications}, J. Funct. Anal. 289 (2025), no. 2, Paper No. 110916. 

\bibitem{DeTurck1983}DeTurck, D., {\sl Deforming metrics in the direction of their Ricci tensors}, J. Diff. Geom. 18 (1983), 157-162.
%

\bibitem{GiesenTopping2011}Giesen G.; P.M. Topping, {\sl Existence of Ricci flows of incomplete surfaces.} Communications in Partial Differential Equations, 2011, 36(10): 1860-1880.

\bibitem{Hamilton1982}Hamilton, R. S., {\sl Three manifolds with positive Ricci curvature} , J. Diff. Geom. 17 (1982), 255-306.

%
%
\bibitem{HochardPhd} Hochard, R., {\sl The\'or\`emes d’existence en temps court du flot de Ricci pour des var- i\'et\'es non-compl\`etes, non-\'effondrées, \`a courbure minor\'ee}, PhD thesis, Universit\'e de Bourdeaux (2019) Available at https://hal.inria.fr/tel-02092609/
%
%


\bibitem{HuangTam2022}Huang, S.; Tam, L.-F.,{\sl Short-time existence for harmonic map heat flow with time-dependent metrics}. J. Geom. Anal. 32 (2022), no. 12, Paper No. 287, 32 pp. 

%
%\bibitem{HuangWang2022}Huang, S.; Wang, B. ,{\sl Ricci flow smoothing for locally collapsing manifolds}. Calc. Var. Partial Differential Equations 61 (2022), no. 2, Paper No. 64, 32 pp. 
%
%

%


\bibitem{Kotschwar2014} Kotschwar, B., {\sl An energy approach to the problem of uniqueness for the Ricci flow}. Comm. Anal. Geom. 22 (2014) 149--176.

\bibitem{Kotschwar2017}Kotschwar, B. ,{\sl Short-time persistence of bounded curvature under the Ricci flow}. Math. Res. Lett. 24 (2017), no. 2, 427--447.


\bibitem{Lai2019}Lai, Y., {\sl Ricci flow under local almost nonnegative curvature conditions}, Adv. Math. 343 (2019) 353--392
%

\bibitem{LeeMa}Lee, M.-C.; Ma, M.-S., {\sl Uniqueness theorems for non-compact mean curvature flow with possibly unbounded curvatures}. Comm. Anal. Geom. 29 (2021), no. 6, 1475--1508.

\bibitem{LeeTamJDG}Lee, M.-C.; Tam, L.-F., {\sl Chern-Ricci flows on noncompact manifolds}, J. Differential Geom. 115 (2020), no. 3, 529–564.


\bibitem{LeeTam2021}Lee, M.-C.; Tam, L.-F.,  {\sl K\"ahler manifolds with almost non-negative curvature}, Geom. Topol. 25--4 (2021), 1979--2015. 
%
%
%


\bibitem{LeeTamPAMS}Lee, M.-C.; Tam, L.-F., {\sl Some curvature estimates of K\"ahler-Ricci flow}. Proc. Amer. Math. Soc. 147 (2019), no. 6, 2641--2654. 

\bibitem{LeeTam2022}Lee, M.-C.; Tam, L.-F., {\sl Some local maximum principles along Ricci flows}. Canad. J. Math. 74 (2022), no. 2, 329--348.
%
%
\bibitem{LeeTopping2022}Lee, M.-C.; Topping, P.M., {\sl Three-manifolds with non-negatively pinched Ricci curvature}. arXiv:2204.00504
%
%
%
\bibitem{Perelman2002}Perelman,  G.,{\sl The entropy formula for the Ricci flow and its geometric applications}. http://arXiv.org/abs/math/0211159v1 (2002)
%
%
%


\bibitem{Shi1989}Shi, W.-X., {\sl Deforming the metric on complete Riemannian manifolds}, J. Differential Geom. 30 (1989), no. 1, 223--301.

\bibitem{Simon2002}Simon, M., {\sl Deformation of $C^0$ Riemannian metrics in the direction of their Ricci curvature}, Comm. Anal. Geom. 10 (2002), no. 5, 1033–1074.

\bibitem{Simon2024}Simon, M., {\sl Preserving curvature lower bounds when Ricci flowing non-smooth initial data}. Surveys in differential geometry 2022. Essays on geometric flows—celebrating 40 years of Ricci flow, 147--187, Surv. Differ. Geom., 27, Int. Press, Somerville, MA, 2024. 

\bibitem{SimonTopping2023} Simon, M.; Topping, P.  M. , {\sl Local control on the geometry in 3D Ricci flow},  J. Differential Geom. 122 (2022), no. 3, 467--518. 
%
%
%
\bibitem{SimonTopping2021} Simon, M.; Topping, P.  M. ,  {\sl Local mollification of Riemannian metrics using Ricci flow, and Ricci limit spaces}. Geom. Topol. 25 (2021), no. 2, 913--948. 
%
%

%\bibitem{Wang2020}Wang, B., {\sl The local entropy along Ricci flow -Part B: the pseudo-locality theorems}, arXiv:2010.09981.
%
%
%

\bibitem{YangZhang2013}Yang, B.; Zheng, F., {\sl $U(n)$-invariant K\"ahler-Ricci flow with non-negative curvature}. Comm. Anal. Geom. 21 (2013), no. 2, 251--294.

\end{thebibliography}
\end{document}